\newtheorem{theorem}{Theorem}[section]
\newtheorem{lemma}[theorem]{Lemma}
\newtheorem{proposition}[theorem]{Proposition}
\newcommand{\rank}{\operatorname{rank}}
\newcommand{\re}{\operatorname{Re}}
\title{Fast Alternating Projections on Manifolds Based on Tangent Spaces}
\author{Guang-Jing Song\thanks{School of Mathematics and Information Sciences, Weifang University,
Weifang 261061, P.R. China. (email: sgjshu@163.com)}
\and
Michael K. Ng\thanks{Department of Mathematics, The University of Hong Kong, Pokfulam, Hong Kong
(email: mng@maths.hku.hk).
M. Ng's research supported in part by the HKRGC GRF 12306616,
12200317, 12300218 and 12300519, and HKU 104005583.
}
}
\begin{document}
\maketitle

\begin{abstract}
In this paper, we study alternating projections
on nontangential manifolds based on the tangent spaces.
The main motivation is that the projection of a point
onto a manifold can be computational expensive. We propose
to use the tangent space of the point in the manifold
to approximate the projection onto the manifold in order to reduce the computational cost.
We show that the sequence generated by alternating projections on two nontangential manifolds
based on tangent spaces,
converges linearly to a point in the intersection of the two manifolds where the convergent point
is close to the optimal solution. Numerical examples for nonnegative low rank matrix
approximation and low rank image quaternion matrix (color image) approximation, are given to demonstrate that
the performance of the proposed method is better than that of the classical alternating projection method
in terms of computational time.
\end{abstract}

\noindent
Keywords: Alternating projection method, manifolds, tangent spaces, nonnegative matrices, low rank, nonnegativity, quaternion matrices \\

\noindent
AMS subject classiflcations. 15A23, 65f22.

\section{Introduction}

Throughout this paper, let ${\cal K}$  be a finite dimensional Hilbert space over $\mathbb{R}$, $\mathcal{M}_{1}$ and $\mathcal{M}_{2}$  be two manifolds included in ${\cal K}$.
The corresponding projection operators on $\mathcal{M}_{1}$, $\mathcal{M}_{2}$ and $\mathcal{M}=\mathcal{M}_{1}\cap \mathcal{M}_{2}$
are denoted by $\pi_{1},\pi_{2}$ and $\pi$ respectively. In this paper, we are interested to determine
a solution defined by $\pi$ onto ${\cal M}$.
For example, nonnegative rank $r$ matrix approximation for nonnegative matrices aims to find a nonnegative rank $r$ matrix such that the
distance between such matrix and the given nonnegative matrix is as small as possible.
Here ${\cal M}_1$ refers to the set of rank $r$ real matrices, ${\cal M}_2$ refers to the set of matrices with nonnegative entries,
and the projection refers to the closest matrix to the given nonnegative matrix $A$, i.e.,
\begin{equation}\label{pj1}
\pi(A) = \underset{X \in {\cal M}}{\operatorname{argmax}} \| A - X \|_F^2, \quad
\pi_1(A) = \underset{X \in {\cal M}_1}{\operatorname{argmax}} \| A - X \|_F^2 \quad
\pi_2(A) = \underset{X \in {\cal M}_2}{\operatorname{argmax}} \| A - X \|_F^2.
\end{equation}

The classical alternating projection method is to determine a solution by using two projections $\pi_1$ onto ${\cal M}_1$
and $\pi_2$ onto ${\cal M}_2$ iteratively. The method is widely used in many fields, for instance
in signal processing \cite{combettes1993signal}, finance \cite{higham2002computing}, machine learning \cite{widrow1987adaptive},
numerical linear algebra \cite{chenchu1996}, image processing \cite{grigoriadis1996,Griogoriadis2000,orsi2006},
and other applications
(see \cite{grigoriadis1994application,hamaker1978angles,kayalar1988error,lee1997conformal,levi1983signal} and references therein).
In the literature, Schwarz \cite{schwarz1986} firstly studied the alternating projection method.
When $\mathcal{M}_{1}$ and $\mathcal{M}_{2}$ are affine linear manifolds,
von Neumann \cite{Neuman1950} proved that the sequence derived by the alternating projection method is globally convergent to
a solution given by the projection onto ${\cal M} = {\cal M}_1 \cap {\cal M}_2$ under the assumption that ${\cal M}
\neq \emptyset$.
And the convergence rate is shown to be linear and governed by
the angle between $\mathcal{M}_{1}$ and $\mathcal{M}_{2}$.
However, when $\mathcal{M}_{1}$ and $\mathcal{M}_{2}$ are nonlinear manifolds, the corresponding results cannot be derived in general, i.e.,
even if $\mathcal{M}_{1}\cap \mathcal{M}_{2}\neq \emptyset$, the sequence generated by the alternating projection method
is not necessary to be convergent.
 Lewis and Malick \cite{lewis2008alternating}
studied the alternating projection method on two smooth manifolds which can be approximated by some affine subspaces.
They proved that if the two manifolds intersect transversally, the sequence can be excepted to be convergent to a point
in the intersection of the two manifolds with a linear rate.
Recently, Andersson and Carlsson \cite{andersson2013alternating} showed that
if the two manifolds have ``nontangential'' intersection points, the
sequence of alternating projections converges linearly to a point in the intersection which is sufficiently close to the optimal solution.

In the alternating projection method, the optimal solution of each projection is assumed. In general,
the computational cost of each projection can be be expensive.
The main aim of this paper is to propose
to use the tangent space of the point in the manifold
to approximate the projection onto the manifold in order to reduce the computational cost.
We show that the sequence generated by alternating projections on two nontangential manifolds
based on tangent spaces,
converges linearly to a point in the intersection of the two manifolds where the convergent point
is close to the optimal solution.
As an application, we demonstrate the proposed algorithm to solve nonnegative low rank matrix
approximation to nonnegative matrices. We also present numerical examples
for nonnegative low rank matrix approximation and
low rank color image approximation, and demonstrate
that the performance of the proposed method is better than that of
other testing methods in terms of computational time.

The rest of this paper is organized as follows.
In Section \ref{sec:main1}, we present the proposed tangent space-based alternating projection
method. Nonnegative low rank matrix approximation problem is used as an example for illustration.
In Section 3, we show the convergence of the proposed tangent space-based
alternating projection method.
In Section 4, numerical examples are given to show the advantages of the proposed method.
Finally, some concluding remarks are given in Section \ref{sec:clu}.

\section{The Proposed Projection Method}
\label{sec:main1}

\subsection{Preliminaries}

In this subsection,
we first provide a review of some necessary concepts and preliminaries of the differential geometry (for details, we refer to \cite{berger1988}).  Let $\mathcal{K}$ be a Euclidean space, i.e., a Hilbert space of finite dimension $n\in \emph{N}$.
Given $A\in \mathcal{K}$ and $r>0$, we write $\mathcal{B}(A,r)$ for the open ball centered at $A$
with radius $r.$ Any subset $\mathcal{M}$ of $\mathcal{K}$ will be given the induced topology
from $\mathcal{K}$. Let $p\geq 1$, and let $\mathcal{M}\subseteq \mathcal{K}$ be an $m$-dimensional
$\mathbb{C}^p$-manifold. We recall that around each $A \in \mathcal{M}$,
there exist an injective $\mathbb{C}^{p}$-immersion $\phi$ on an open set $U$ in
$\mathbb{R}^{m}$ such that
\begin{align}\label{aff1}
\mathcal{M} \cap \mathcal{B}(A,s)=\textit{Im}~ \phi \cap \mathcal{B}(A,s)
\end{align}
for some $s>0$, where $\textit{Im}(\phi)$ denotes the image of $\phi$.
If $A=\phi(x_{A})$, we define the tangent space
$T_{\mathcal{M}}(A)$
by
$$
T_{\mathcal{M}}(A)=\emph{Range} \ d\phi(x_{A}),
$$
where $Range$ refers to the range space and $d\phi$ denotes the Jacobian matrix.
This property is very important which is saying that
the tangent space $T_{\mathcal{M}}(A)$ provides a local vector space approximation of
the manifold $\mathcal{M}$. It is well known that this definition of tangent space is independent of
$\phi$. Moreover, we set
$$
\tilde{T}_{\mathcal{M}}(A)=T_{\mathcal{M}}(A)+A,
$$
i.e., $\tilde{T}_{\mathcal{M}}(A)$ is the affine linear manifold which is tangent to $\mathcal{M}$ at $A.$ Let the map $\phi$ be given as in \eqref{aff1}. Suppose that $A =\phi(x_{A})\in Im(\phi),$ the projection onto the tangent space of $\mathcal{M}$ at $A$ can be written as $P_{T_{\mathcal{M}}(A)}=M(M^*M)^{-1}M^*,$ where $M=d \phi(\phi^{-1}(A))$. Then the following result can be derived by the continuous of $d\phi$ and $\phi^{-1}$.

\begin{proposition}[Proposition 2.2 in \cite{andersson2013alternating}]\label{pro1}
Let $\mathcal{M}$ be a $\mathbb{C}^{1}$-manifold. Then $P_{T_{\mathcal{M}}(A)}$ is a continuous function of $A$.
\end{proposition}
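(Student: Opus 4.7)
The plan is to express $P_{T_{\mathcal{M}}(A)}$ as a composition of continuous maps and then apply standard continuity results for matrix operations. Since the statement is local (the tangent space is defined via a local parametrization, but is independent of the choice of parametrization), it suffices to fix a point $A_0 \in \mathcal{M}$ and a $C^1$-immersion $\phi : U \to \mathcal{K}$ as in \eqref{aff1}, and show continuity of $A \mapsto P_{T_{\mathcal{M}}(A)}$ in a neighborhood of $A_0$ in $\mathcal{M}$.

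First I would factor the map $A \mapsto P_{T_{\mathcal{M}}(A)}$ through the three intermediate maps
\begin{align*}
A \; \longmapsto \; x_A = \phi^{-1}(A) \; \longmapsto \; M(A) = d\phi(x_A) \; \longmapsto \; M(A)\bigl(M(A)^{*}M(A)\bigr)^{-1}M(A)^{*}.
\end{align*}
Continuity of the first map follows because the injective $C^1$-immersion $\phi$ restricts, on a sufficiently small neighborhood, to a homeomorphism onto its image (so $\phi^{-1}$ is continuous on $\mathcal{M}\cap\mathcal{B}(A_0,s)$ for small enough $s$). Continuity of the second map is immediate from $\phi$ being $C^1$, which means $d\phi$ is continuous in $x$. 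For the third map, since $\phi$ is an immersion $M(A)$ has full column rank, so $M(A)^{*}M(A)$ is a symmetric positive definite $m \times m$ matrix and is therefore invertible; continuity of matrix inversion on the open set of invertible matrices, combined with the continuity of matrix multiplication and adjoint, gives continuity of the third map.

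Composing these three continuous maps yields continuity of $A \mapsto P_{T_{\mathcal{M}}(A)}$ on a neighborhood of $A_0$, and since $A_0$ was arbitrary the result follows globally on $\mathcal{M}$. The only genuinely delicate point is verifying that $\phi^{-1}$ is continuous on $\mathcal{M}\cap\mathcal{B}(A_0,s)$: one must invoke the fact that an injective $C^1$-immersion is a local homeomorphism onto its image when $\mathcal{M}$ is endowed with the induced topology from $\mathcal{K}$, which is exactly the setting stipulated in the preliminaries. Everything else reduces to the standard continuity of the algebraic operations of multiplication, transposition, and inversion on matrices.
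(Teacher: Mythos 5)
Your proposal is correct and follows essentially the same route the paper sketches: it writes $P_{T_{\mathcal{M}}(A)}=M(M^{*}M)^{-1}M^{*}$ with $M=d\phi(\phi^{-1}(A))$ and deduces continuity from the continuity of $d\phi$ and $\phi^{-1}$ together with the standard continuity of the matrix operations. Your version merely fills in the details (local homeomorphism property of the injective immersion, invertibility of $M^{*}M$ from full column rank) that the paper leaves implicit.
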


The following proposition shows that the projections listed in \eqref{pj1} are locally well-defined.

\begin{proposition}[Proposition 2.3 in \cite{andersson2013alternating}]\label{pro2}
Let $\mathcal{M}$ be a $\mathbb{C}^{2}$-manifold, and let $A\in \mathcal{M}$ be given. Then there exists an $s>0$ such that for all $B\in \mathcal{B}_{\mathcal{K}}(A,s)$, there
exist a unique closest point in $\mathcal{M}.$ Denoting this point by $\pi(B),$ the map $\pi:~\mathcal{B}_{\mathcal{K}}(A,s)\rightarrow \mathcal{M}$ is $\mathbb{C}^{2}$. Moreover, $C\in \mathcal{M}\cap \mathcal{B}_{\mathcal{K}}(A,s)$ equals $\pi(B)$ if and only if $B-C\perp T_{\mathcal{M}}(C)$.
\end{proposition}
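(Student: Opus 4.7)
The plan is to reduce the proposition to the implicit function theorem applied to the first-order optimality condition for the distance function, then to argue by a compactness/continuity estimate that the critical point so obtained is actually the global nearest point inside a sufficiently small ball.

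First I would set up the local picture from \eqref{aff1}. Let $\phi:U\to\mathcal{K}$ be the injective $\mathbb{C}^{2}$-immersion with $A=\phi(x_{A})$ and $\mathcal{M}\cap \mathcal{B}(A,s_{0})=\mathrm{Im}\,\phi\cap\mathcal{B}(A,s_{0})$. For $B\in\mathcal{K}$ fixed near $A$, the squared distance function $f_{B}(x)=\tfrac12\|B-\phi(x)\|^{2}$ is $\mathbb{C}^{2}$ on $U$, and its Euclidean gradient is
\begin{equation*}
F(x,B):=\nabla_{x} f_{B}(x)=-\,d\phi(x)^{\ast}\bigl(B-\phi(x)\bigr).
\end{equation*}
A critical point $x$ for $f_{B}$ therefore corresponds exactly to the orthogonality condition $B-\phi(x)\perp T_{\mathcal{M}}(\phi(x))$, which will give the ``moreover'' statement for free once the rest is in place.

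Next I would apply the implicit function theorem to $F(x,B)=0$ near $(x_{A},A)$. We have $F(x_{A},A)=0$ since $B-\phi(x_{A})=0$. The partial derivative in $x$ is
\begin{equation*}
\partial_{x} F(x,B)=d\phi(x)^{\ast}d\phi(x)-\bigl\langle B-\phi(x),d^{2}\phi(x)\,\cdot\,\bigr\rangle,
\end{equation*}
and at $(x_{A},A)$ the second term vanishes, leaving $d\phi(x_{A})^{\ast}d\phi(x_{A})$, which is invertible because $\phi$ is an immersion. By the $\mathbb{C}^{2}$ version of the implicit function theorem, there exist $s_{1}\in(0,s_{0})$ and a $\mathbb{C}^{2}$ map $B\mapsto x(B)$ on $\mathcal{B}_{\mathcal{K}}(A,s_{1})$ with $x(A)=x_{A}$ and $F(x(B),B)=0$; define $\pi(B):=\phi(x(B))$. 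This is a $\mathbb{C}^{2}$ mapping into $\mathcal{M}$, and by construction $B-\pi(B)\perp T_{\mathcal{M}}(\pi(B))$.

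The main obstacle is the global optimality of this candidate: there is no \emph{a priori} reason why $\pi(B)$ is the actual closest point of $\mathcal{M}$ rather than just a critical point. To handle this, I would shrink the radius once more. First, using the continuity of the solution branch, pick $s_{2}\leq s_{1}$ so small that $\|\pi(B)-A\|<s_{0}/3$ whenever $\|B-A\|<s_{2}$. Then for any $C\in\mathcal{M}$ with $\|B-C\|\leq \|B-\pi(B)\|$, one has $\|C-A\|\leq \|C-B\|+\|B-A\|\leq \|B-\pi(B)\|+\|B-A\|\leq 2\|B-A\|+\|\pi(B)-A\|$, which is less than $s_{0}$ if $s_{2}$ is small enough, so $C\in \mathcal{M}\cap\mathcal{B}(A,s_{0})=\mathrm{Im}\,\phi\cap\mathcal{B}(A,s_{0})$. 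Hence $C=\phi(y)$ for some $y\in U$ close to $x_{A}$, and $y$ is a critical point of $f_{B}$. Finally I would use the local invertibility from the implicit function theorem (the same Jacobian is nondegenerate on a neighborhood of $x_{A}$ by continuity) to conclude that the critical point of $f_{B}$ near $x_{A}$ is unique, forcing $y=x(B)$ and $C=\pi(B)$. Combined with the fact that an actual closest point exists (take a minimizing sequence and use that $\mathcal{M}\cap \overline{\mathcal{B}(A,s_{0}/2)}$ is complete relative to $B$'s proximity), this gives existence, uniqueness, the $\mathbb{C}^{2}$ regularity of $\pi$, and the orthogonality characterization simultaneously, with $s:=s_{2}$.
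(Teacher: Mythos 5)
The paper does not prove this proposition; it is imported verbatim from Andersson and Carlsson, so there is no in-paper argument to compare yours against and I am judging the proposal on its own merits. Your strategy --- the implicit function theorem applied to the normal equation $F(x,B)=-d\phi(x)^{\ast}\bigl(B-\phi(x)\bigr)=0$, followed by a localization argument showing that the critical branch is the global nearest point --- is the standard and correct route. There is, however, one genuine gap and two repairable slips. The gap is the regularity claim: since $\phi$ is only $\mathbb{C}^{2}$, the map $F$ involves $d\phi$ and is therefore only $\mathbb{C}^{1}$ in $(x,B)$, so the implicit function theorem delivers a $\mathbb{C}^{1}$ branch $x(B)$ and hence $\pi\in\mathbb{C}^{1}$, not $\mathbb{C}^{2}$ as you assert; you would need $\phi\in\mathbb{C}^{3}$ for that, and in general the metric projection onto a $\mathbb{C}^{k}$ submanifold is only $\mathbb{C}^{k-1}$. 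The statement as transcribed here claims $\mathbb{C}^{2}$, but the corresponding proposition in Andersson--Carlsson asserts only $\mathbb{C}^{1}$, and only local Lipschitz continuity of $\pi$ is actually used in the convergence proofs of this paper, so your argument establishes the result that is needed while the quoted regularity is an overstatement that this route cannot (and should not be expected to) reach.

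The two slips. First, your existence argument (``minimizing sequence'' plus completeness of $\mathcal{M}\cap\overline{\mathcal{B}(A,s_{0}/2)}$) fails as stated because $\mathcal{M}$ need not be closed in $\mathcal{K}$: the fixed-rank manifold $\mathcal{M}_{r}$ used throughout this very paper is the canonical counterexample, its closure containing lower-rank matrices. The fix is to minimize $y\mapsto\|B-\phi(y)\|$ over a compact set $\overline{V'}\subset U$ whose image $\phi(\overline{V'})$ contains all competitors, and then use your distance estimate to show the minimizer is interior to $V'$, hence a critical point. Second, the sentence ``hence $C=\phi(y)$ \dots\ and $y$ is a critical point of $f_{B}$'' is false for an arbitrary competitor $C$ with $\|B-C\|\leq\|B-\pi(B)\|$; criticality holds only at a point where the minimum is attained (or where the orthogonality hypothesis of the ``moreover'' clause is assumed). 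Reordering so that you first produce an attained interior minimizer and only then invoke criticality and the uniqueness clause of the implicit function theorem repairs this, and the same uniqueness clause applied to any $C$ with $B-C\perp T_{\mathcal{M}}(C)$ gives the ``moreover'' equivalence as you claim. You also implicitly use that $\phi^{-1}$ is continuous on $\mathrm{Im}\,\phi\cap\mathcal{B}(A,s_{0})$, so that $C$ near $A$ forces $y$ near $x_{A}$ and inside the uniqueness neighborhood; this is part of $\phi$ being a chart rather than a mere injective immersion and deserves to be stated.
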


\subsection{The Alternating Projection Method}\label{ap}

It is well known that the convergence speed of the alternating projection method on linear manifolds is linear and decided by the angle between the two linear manifolds. Then, in order to generalize the alternating projection method to nonlinear manifolds, we first need to introduce the angle between two nonlinear manifolds. In this paper, the angle $\alpha(A)$ of $A\in \mathcal{M}=\mathcal{M}_{1}\cap \mathcal{M}_{2}$ is defined as
\begin{align}\label{df1}
\alpha(A)=cos^{-1}(\sigma(A)) ~ {\rm and}~
\sigma(A)=\lim_{\xi\rightarrow 0} \sup_{B_{1}\in F^{r}_{1}(A), B_{2}\in F^{r}_{2}(A)}
\left\{\frac{\left<B_{1}-A,B_{2}-A\right>}{\|B_{1}-A\|_{F}\|B_{2}-A\|_{F}}\right\},
\end{align}
with
$$
F_{1}^{\xi}(A)=
\{
B_1 \ | \ B_1 \in \mathcal{M}_{1}\backslash A,
\|B_1-A\|_{F}\leq \xi, B_{1}-A \bot T_{\mathcal{M}_{1}\cap \mathcal{M}_{2}}(A)
\},
$$
$$
F_{2}^{\xi}(A)=
\{
B_2 \ | \ B_2 \in \mathcal{M}_{2}\backslash A,
\|B_2-A\|_{F}\leq \xi, B_{2}-A \bot T_{\mathcal{M}_{1}\cap \mathcal{M}_{2}}(A)
\},
$$
and
$T_{\mathcal{M}_{1}\cap \mathcal{M}_{2}}(A)$ is
the tangent space of
$\mathcal{M}_{1} \cap \mathcal{M}_{2}$ at point $A$. Based on this definition, a point $A\in \mathcal{M}_{1}\cap \mathcal{M}_{2}$
is called a nontrivial intersection point when the angle is well defined. In addition, $A$ is tangential if $\alpha(A)=0$ and
nontangential if $\alpha(A)>0.$  Andersson and Carlsson \cite{andersson2013alternating} showed that if
$A$ is a nontangential intersection point of $\mathcal{M}_{1}$ and $\mathcal{M}_{2},$  there exist an $r>0$, such that for any point $B\in \mathcal{B}(A,r)$, the sequence of alternating projections
\begin{align*}
X_{0}=\pi_{1}(B),~X_{1}=\pi_{2}(X_{0}),~X_{2}=\pi_{1}(X_{1}), ~X_{3}=\pi_{2}(X_{2}),~....~,
\end{align*}
convergent to a point on $\mathcal{M}=\mathcal{M}_{1}\cap\mathcal{M}_{2}$, which is fairly close to the optimal point $\pi(B)$, see Figure 1(a).
In addition, the convergence rate is proved to be decided by the angle between the two manifolds.

\subsection{Projections Based on Tangent Spaces}

Alternating Projection (AP) method updates the sequence by projecting an initial point
back and forth between two manifolds. However, it can be computational expensive when a point is directly projected onto a manifold. For example, it is expensive to project a matrix onto the fixed rank manifold by the singular value decomposition (SVD) with a truncation out small singular values. Thus it is meaningful to find some new algorithm to reduce the computation complexity.
Note that matrix manifold algorithms based on the tangent space
have been widely studied in the literature (see for instance \cite{ccw2019,kwei} and their references therein).
These works motivated us to propose a Tangent spaces-based Alternating Projection (TAP) method
to reduce the computational cost.

\begin{figure}
\begin{center}
\includegraphics[height=2.5in,width=5.5in]{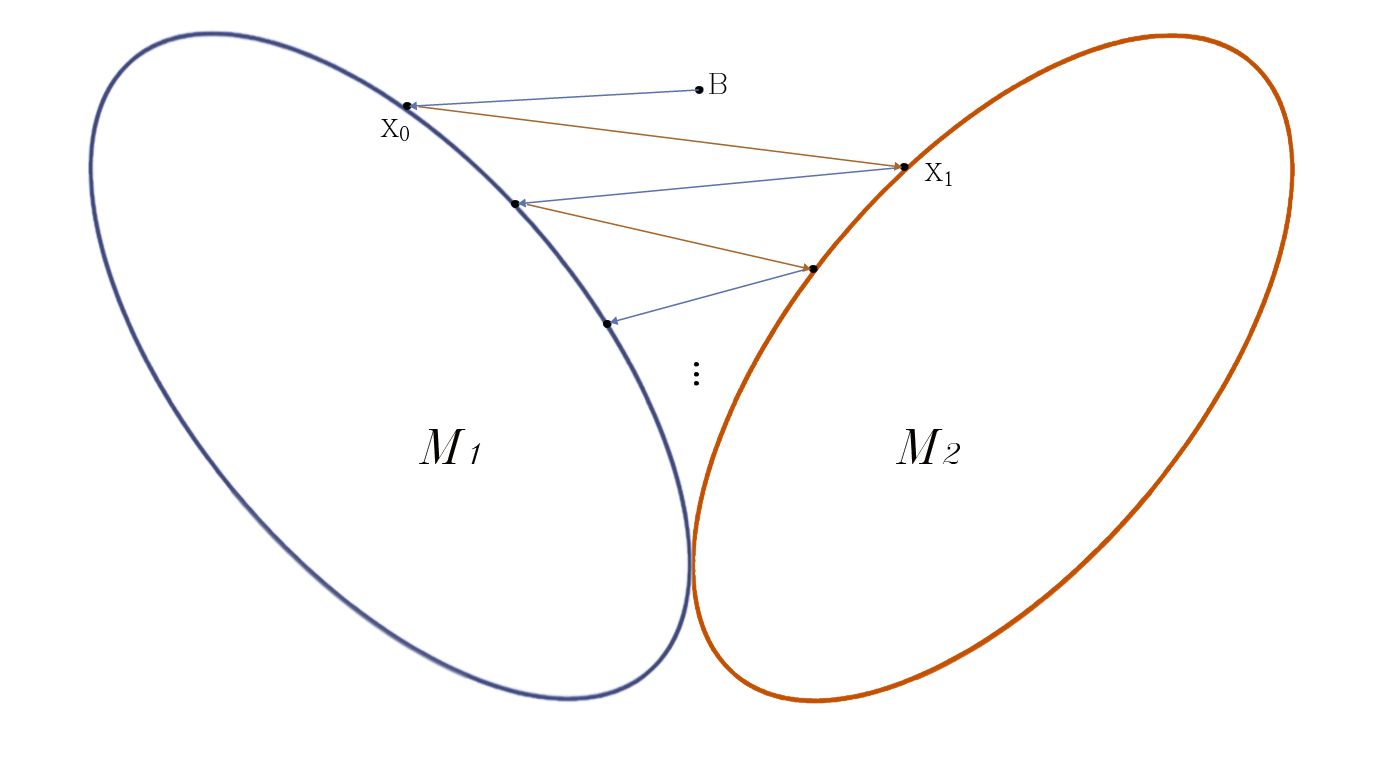}

\centerline{(a)}

\includegraphics[height=2.5in,width=5.5in]{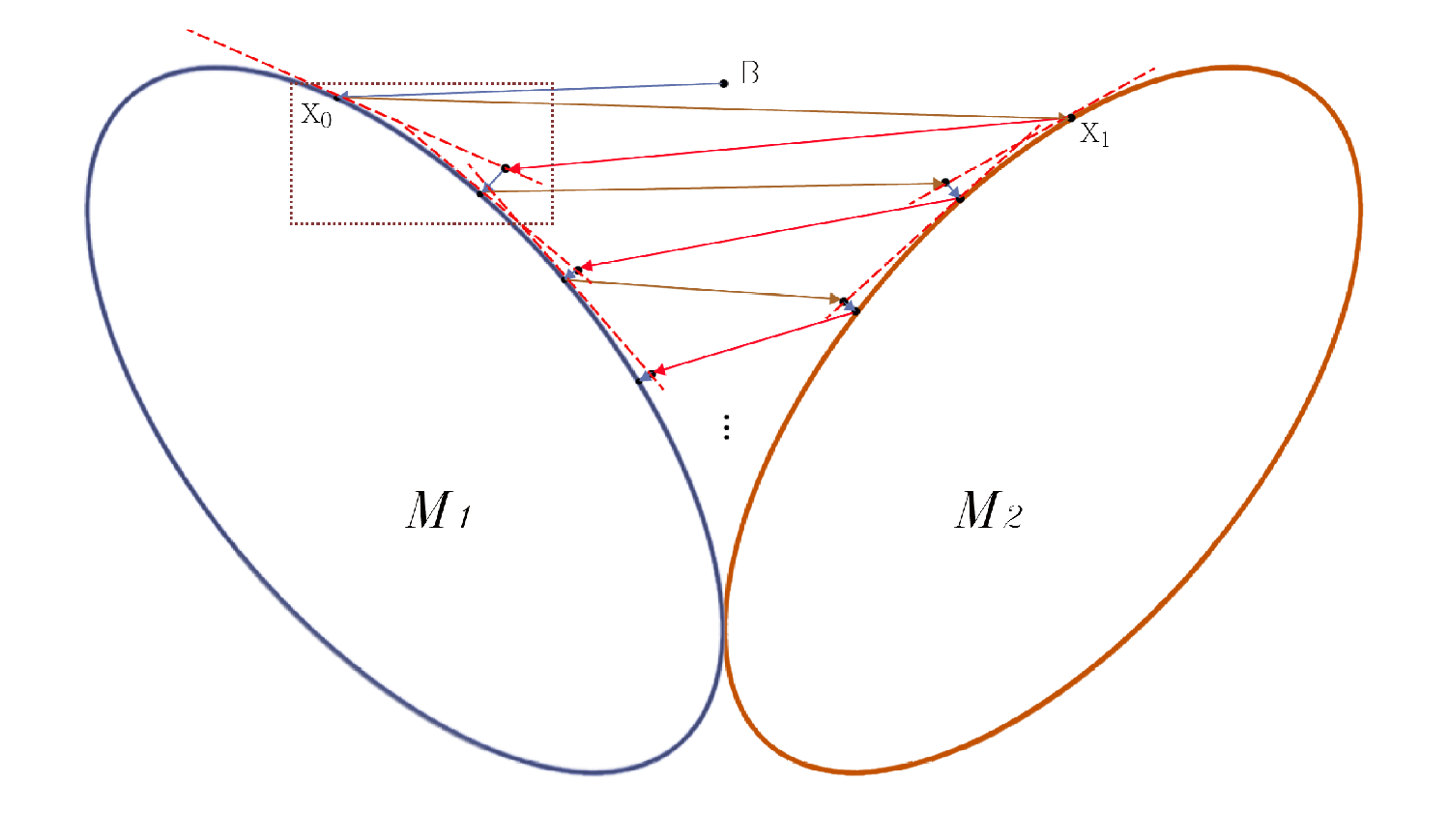}

\centerline{(b)}

\caption{The comparison between (a) the AP method and (b) the proposed TAP method.} \label{figure1}

\end{center}
\end{figure}

\begin{figure}
\begin{center}

\includegraphics[height=2.5in,width=5.5in]{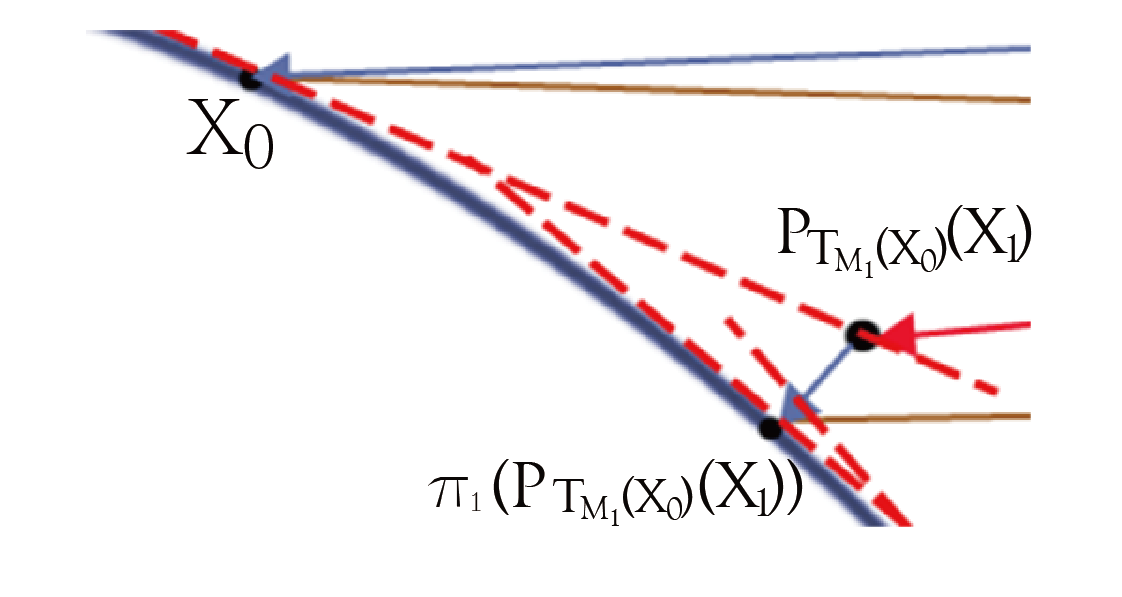}

\end{center}

\caption{The zoomed region in Figure 1(b).}
\label{figure1a}
\end{figure}

In Figure \ref{figure1} and Figure \ref{figure1a}, we demonstrate the proposed TAP method.
In the method,
the given point $B$ was first projected onto the manifold $\mathcal{M}_{1}$ to get a point $X_{0}$ by $\pi_{1}$, and then  $X_{1}$ is derived by projecting $X_{0}$ onto the manifold $\mathcal{M}_{2}$ by $\pi_{2}.$ The first two steps are same as the usual AP method given in  \cite{lewis2008alternating} and \cite{andersson2013alternating}. From the third step, the point $X_{1}$ is first projected onto the tangent space at $X_{0}$ of the manifold $\mathcal{M}_{1}$ by the  orthogonal projection $P_{T_{\mathcal{M}}(X_{0})}$,  and then the derived point is projected from the tangent space to the manifold to get the point $X_{2},$ after that the iterative sequence can be derived by similar projection method, which can be expressed as:
\begin{align}\label{eq1}
X_{0}=\pi_{1}(B),~X_{1}=\pi_{2}(X_{0}),~X_{2}=\pi_{1}(P_{T_{\mathcal{M}_{1}}(X_{0})} (X_{1})),~X_{3}=\pi_{2}(P_{T_{\mathcal{M}_{2}}(X_{1})} (X_{2})),....,
\end{align}
where $P_{T_{\mathcal{M}}(X_{i-1})}(X_{i}),~i=1,...,$ denotes the orthogonal projections of $X_{i}$ onto the tangent space of $\mathcal{M}$ at points $X_{i-1}, $ respectively. The proposed TAP method is given as the following algorithm.

\begin{algorithm}[h]
\caption{Tangent spaces-based Alternating Projection (TAP) Method} \label{ag1}
\textbf{Input: } Given a point ${A}\in \mathcal{B}(A_{0},s)$,
this algorithm computes the point on $\mathcal{M}_{1}\cap \mathcal{M}_{2}$ nearest $\pi(A)$. \\
~~1:  Initialize $X_{0}=A;$\\
~~2: $X_{1}=\pi_{1}(X_{0})$ and $Y_{1}=\pi_{2}(X_{1})$\\
~~3: for k=1,2,...,\\
~~4:  \quad ${ X}_{k+1}=\pi_{1}(P_{T_{\mathcal{M}_{1}}(X_{k})} (Y_{k}));$\\
~~5:  \quad ${ Y}_{k+1}=\pi_{2}(P_{T_{\mathcal{M}_{2}}(Y_{k})} (X_{k+1}));$\\
~~6: \textbf{end}\\
\textbf{Output:} ${ X}_k$ when the stopping criterion is satisfied.
\end{algorithm}

\subsection{Nonnegative Low Rank Matrix Approximation}

In this subsection, we demonstrate the proposed method by considering nonnegative low rank matrix approximation.
The nonnegative low rank matrix approximation is recently studied by Song and Ng in \cite{sm2019}.
The aim is to find a nonnegative low rank matrix $X$ such that $X \approx A$ such that their difference is as small as possible.
Mathematically, it can be formulated as the following optimization problem
\begin{equation}\label{pmain}
\min_{\rank({X})=r,{ X}\geq 0} \ \| { A}- { X}\|_{\textrm{F}}^{2}.
\end{equation}
In \cite{sm2019}, Song and Ng developed nonnegative low rank matrix
approximation by using the alternating projections on the $m\times n$ fixed-rank matrices manifold
\begin{align}\label{v1}
\mathcal{M}_{r}:=\left\{ X\in \mathbb{R}^{m\times n},~ \rank(X)= r\right\},
\end{align}
and the $m\times n$ non-negativity matrices manifold
\begin{align}
\mathcal{M}_{n}:=\left\{X\in \mathbb{R}^{m\times n},~ X_{i,j}\geq 0,~ i=1,...,m,~j=1,...,n\right\}.\label{v2}
\end{align}
The projection onto the fixed rank matrix set $\mathcal{M}_{r}$ is derived by the Eckart-Young-Mirsky theorem \cite{golub2012matrix} which  can be expressed as
\begin{align}\label{p1}
\pi_{1}({ X})=\sum_{i=1}^{r}\sigma_{i}(X) u_{i}(X) {v}_{i}^{T}(X),
\end{align}
where $\sigma_{i}(X)$ are first $r$ singular values of $X$, and $ u_{i}(X), v_{i}(X)$ are first $r$ columns of the unitary matrices of
$U(X)$ and $V(X)$.
The projection onto the nonnegative matrix set $\mathcal{M}_{n}$ is expressed as
\begin{align}\label{p2}
\pi_{2}({ X})=\left\{\begin{array}{c}
                     X_{ij}, ~~~ {\rm if} ~~ X_{ij}\geq 0, \\
                     0,     ~~~~~ {\rm if} ~~   X_{ij} < 0.
                   \end{array}
\right.
\end{align}
Then the sequence derived by the alternating projection method is convergent to a point on the intersection of the two manifolds
\begin{equation}\label{v3}
\mathcal{M}_{r} \cap \mathcal{M}_{n}=\left\{X\in \mathbb{R}^{m\times n}, ~ \rank(X)=r,~X_{ij}\geq 0, ~i=1,...,m,~j=1,...,n\right\},
\end{equation}
which is sufficiently close to the best nonnegative approximation, see \cite{sm2019}.
The main computational cost the above alternating projection method is to obtain the singular value decomposition $\pi_{1}({ X})$
at each iteration.

Let us consider the proposed TAP for solving nonnegative low rank matrix approximation problem.
Suppose that $k \ge 1$, $X_{k}$ and $Y_{k}$ are two consecutive terms in the sequence which are located on the manifold $\mathcal{M}_{r}$ and $\mathcal{M}_{n}$ respectively in Algorithm 1. Let
$X_{k}=U_k \Sigma_k V_k^{T}$ be the skinny SVD decomposition of $X_{k}$. It follows the results in
\cite{absil2009optimization} that the tangent space of $\mathcal{M}_{r}$ at $X_{k}$ can be expressed as
\begin{equation}
T_{\mathcal{M}_{r}(X_{k})}=\{ U_k W^{T} + Z V_k^{T} ~ | ~ W, Z \in \mathbb{R}^{n\times r} \text{are arbitrary}\}.
\end{equation}
For the given iterate $Y_{k}$, it can be easily
derived that the projections of $Y_{k}$ onto the subspace $T_{\mathcal{M}_{r}(X_{k})}$ and its orthogonal complement can be written as
\begin{align*}
P_{T_{\mathcal{M}_{r}(X_{k})}}(Y_{k})= U_k U_k^{T} Y_{k} + Y_{k} V_k V_k^{T} -
U_k U_k^{T} Y_{k} V_k V_k^{T},
\end{align*}
and
\begin{align*}
(I-P_{T_{\mathcal{M}_{r}(X_{k})}})(Y_{k})=(I- U_k U_k^{T}) Y_{k}(I- V_k V_k^{T}).
\end{align*}
Then $X_{k+1}=\pi_{1}(P_{T_{\mathcal{M}_{r}(X_{k})}}(Y_{k}))$ can be derived by projecting $P_{T_{\mathcal{M}_{r}(X_{k})}}(Y_{k})$ from the tangent space $T_{\mathcal{M}_{r}(X_{k})}$ to the manifold $\mathcal{M}_{r}$,
where $\pi_{1}$ is defined as \eqref{p1}. Compared with the AP method given in \cite{sm2019},
although an intermediate process is added in the TAP method, the matrix can be projected onto the $\mathcal{M}_{r}$ from a low dimensional subspaces which can reduce computational cost.  Computing the best rank-$r$ approximation of a non-structured $n\times n$ matrix, costs $O(n^{2}r)+n^2$ flops with a large hidden constant in front of $n^{2}r$.
In the proposed TAP method, the estimate of $X_{k+1}$ can be computed in a very efficient way.
Suppose that the $QR$ decompositions of $(I-U_k U_k^{T}) Y_{k} V_k$ and $(I-V_k V_k^{T})Y_{k}U_k$ are given as follows:
\begin{align*}
(I-U_k U_k^{T}) Y_{k} V_k = Q_k R_k
~ \text{and} ~
(I-V_k V_k^{T})Y_{k}U_k = \hat{Q}_k \hat{R}_k,
\end{align*}
respectively.
Recall that $U_k^{T}Q_k= V_k^{T} \hat{Q}_k =0$ and then by a direct computation, we have
\begin{align*}
& P_{T_{\mathcal{M}_{r}(X_{k})}}(Y_{k}) \\
=&~
U_k U_k^{T}) Y_{k} (I-V_k V_k^{T}) + (I-U_kU_k^{T}) Y_{k} V_k V_k^T +
U_k U_k^{T}) Y_{k} V_k V_k^{T} \\
=&~U_k \hat{R}_k^{T} \hat{Q}_k^{T} + Q_k R_k V_k^T + U_k U_k^T Y_k V_k V_k^T \\
=&~\left(
     \begin{array}{cc}
       U_k & Q_k \\
     \end{array}
   \right)\left(
            \begin{array}{cc}
              U_k^{T}Y_{k} V_k & \hat{R}_k^{T} \\
              R_k & 0 \\
            \end{array}
          \right)\left(
                      \begin{array}{c}
                        V_k^T \\
                        \hat{Q}_k^T \\
                      \end{array}
                    \right)\\
=:&~\left(
     \begin{array}{cc}
       U_k & Q_k \\
     \end{array}
   \right) M_k \left(
                      \begin{array}{c}
                        V_k^T \\
                        \hat{Q}_k^T \\
                      \end{array}
                    \right).
\end{align*}
Let $M_k= \Psi_k \Gamma_k \Phi_k^T$
be the SVD of $M_k$ which can be computed using $O(r^3)$ flops since
$M_k$ is a $2r\times 2r$ matrix.
Note that $\left(
            \begin{array}{cc}
              U_k & Q_k \\
            \end{array}
          \right)$
  and $\left(
            \begin{array}{cc}
              V_k & \hat{Q}_k \\
            \end{array}
          \right)$
  are orthogonal, then the SVD of $P_{T_{\mathcal{M}_{r}(X_{k})}}(Y_{k})= \Omega_k \Theta_k \Upsilon_k^T$
  can be computed by
 \begin{align*}
 \Omega_k =\left(
            \begin{array}{cc}
              U_k & Q_k \\
            \end{array}
          \right) \Psi_k,
          ~~
          \Theta_k = \Gamma_k ~~ \text{and} ~~
 \Upsilon_k=\left(
            \begin{array}{cc}
              V_k & \hat{Q}_k \\
            \end{array}
          \right) \Phi_k.
 \end{align*}
 It follows that  the overall computational cost of $\pi_{1}(P_{T_{\mathcal{M}_{r}(X_{k})}}(Y_{k}))$ can be expressed as two matrix-matrix multiplications between
an $n\times n$ matrix and an $n\times r$ matrix,  the $QR$ decomposition of two $n\times r$ matrices, and SVD of a $2r\times 2r$ matrix, and a few matrix-matrix multiplications between a $r\times n$ matrix and an $n\times r$ matrix or between an $n\times r$ matrix and a $r\times r$ matrix, which leading to a total of $4n^2r+O(r^2n+r^3)$ flops. The computational cost of each iteration of TAP method
is less than that of AP method.

\section{The Convergence Analysis}

In this section, we would like to show the convergence of the proposed TAP method.
We begin this section with some results given in \cite{andersson2013alternating} which are necessary for the proof of the convergence of Algorithm  \ref{ag1}.
The following lemma  says that the affine tangent-spaces are close to the manifold locally.

\begin{lemma}[Proposition 2.4 in \cite{andersson2013alternating}]\label{jia1}
Let $\mathcal{M}$ be a $\mathbb{C}^{2}$-manifold and $A\in \mathcal{M}$ be given. For each $\epsilon >0,$ there exists $s>0$ such that for all
$C\in \mathcal{B}(A,s)\cap \mathcal{M},$ we have:\\
$(i)~~ dist(D,\widetilde{T}_{\mathcal{M}}(C))\leq \epsilon \|D-C\|_{F},~\forall D\in \mathcal{B}(A,s)\cap\mathcal{M}.$\\
$(ii)~ dist(D,\mathcal{M})\leq \epsilon \|D-C\|_{F},~\forall D\in \mathcal{B}(A,s)\cap \widetilde{T}_{\mathcal{M}}(C)$.
\end{lemma}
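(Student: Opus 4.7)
The plan is to work entirely through a local $\mathbb{C}^2$ parametrization of the manifold and read both statements off a second-order Taylor expansion, with the immersion condition providing the bridge between parameter-space and ambient-space norms.

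First I would invoke the $\mathbb{C}^2$-immersion $\phi:U\to\mathcal{K}$ from \eqref{aff1}, so that a neighborhood of $A$ in $\mathcal{M}$ is parametrized as $\phi(U')$ for some open $U'\subseteq\mathbb{R}^m$ with $\phi(x_A)=A$. Since $\phi$ is $\mathbb{C}^2$, on any compact subneighborhood of $x_A$ there is a constant $K$ with $\|\phi(y)-\phi(x)-d\phi(x)(y-x)\|_F\le K\|y-x\|^2$. Because $d\phi(x_A)$ is injective, $(d\phi(x_A))^*d\phi(x_A)$ is invertible, and by continuity of $d\phi$ there is a neighborhood of $x_A$ on which $d\phi(x)$ is uniformly bounded above and below: $c\|w\|\le\|d\phi(x)w\|_F\le C\|w\|$. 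These upper/lower bounds, together with the mean value theorem applied to $\phi$ and to a local inverse of $\phi$ (which exists because $\phi$ is an injective $\mathbb{C}^2$-immersion), let me shrink to a radius $s_0>0$ so that for $C,D\in\mathcal{B}(A,s_0)\cap\mathcal{M}$ with $C=\phi(x_C)$, $D=\phi(x_D)$, I have $\|x_D-x_C\|\le L\|D-C\|_F$ for a uniform $L>0$.

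For part (i), given such $C$ and $D$, the point $C+d\phi(x_C)(x_D-x_C)$ lies in $\widetilde{T}_{\mathcal{M}}(C)$, so
\begin{equation*}
\mathrm{dist}(D,\widetilde{T}_{\mathcal{M}}(C))\le\|\phi(x_D)-\phi(x_C)-d\phi(x_C)(x_D-x_C)\|_F\le K\|x_D-x_C\|^2\le KL^2\|D-C\|_F^2.
\end{equation*}
Choosing $s\le\min(s_0,\epsilon/(KL^2))$ gives $\mathrm{dist}(D,\widetilde{T}_{\mathcal{M}}(C))\le\epsilon\|D-C\|_F$. For part (ii), given $D\in\mathcal{B}(A,s_0)\cap\widetilde{T}_{\mathcal{M}}(C)$, write $D=C+d\phi(x_C)v$ with $v\in\mathbb{R}^m$; the lower bound $\|d\phi(x_C)v\|_F\ge c\|v\|$ yields $\|v\|\le c^{-1}\|D-C\|_F$. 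For $s$ small enough to guarantee $x_C+v\in U'$, define the candidate manifold point $E:=\phi(x_C+v)\in\mathcal{M}$. Then
\begin{equation*}
\mathrm{dist}(D,\mathcal{M})\le\|E-D\|_F=\|\phi(x_C+v)-\phi(x_C)-d\phi(x_C)v\|_F\le K\|v\|^2\le Kc^{-2}\|D-C\|_F^2,
\end{equation*}
and shrinking $s$ further to $s\le\epsilon c^2/K$ finishes (ii). Taking the minimum of the three thresholds gives a single $s$ that works for both claims simultaneously.

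The routine part is the Taylor estimate; the step I expect to require the most care is the uniform control that lets me move from parameter-space norms $\|x_D-x_C\|$ and $\|v\|$ back to the ambient norm $\|D-C\|_F$. This relies on the immersion hypothesis producing a \emph{uniform} lower bound $c>0$ on $\|d\phi(x)w\|_F/\|w\|$ over a neighborhood of $x_A$, and on the fact that $\phi$ has a $\mathbb{C}^1$ local inverse onto its image (so that distances in $\mathcal{K}$ and in $U'$ are comparable). I would also need to ensure the auxiliary point $x_C+v$ stays in the domain of $\phi$; shrinking $s$ one more time, using $\|v\|\le c^{-1}\|D-C\|_F\le 2c^{-1}s$, handles this. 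Once these uniform bounds are in place, both (i) and (ii) reduce to the single second-order Taylor remainder estimate for $\phi$.
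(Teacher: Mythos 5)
The paper does not prove this lemma at all: it is imported verbatim as Proposition~2.4 of \cite{andersson2013alternating}, so there is no in-paper argument to compare against. Your proof is the standard one underlying that cited result --- second-order Taylor expansion of the local $\mathbb{C}^2$ parametrization $\phi$, combined with uniform two-sided bounds on $d\phi$ to pass between parameter-space and ambient norms --- and it is correct, up to a harmless bookkeeping point: since $C,D\in\mathcal{B}(A,s)$ only gives $\|D-C\|_F<2s$, your thresholds $s\le\epsilon/(KL^2)$ and $s\le\epsilon c^2/K$ should each carry an extra factor of $\tfrac{1}{2}$.
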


Let $\rho_{j}(B) := P_{T_{\mathcal{M}_{j}}(\pi(B))}(B),~ j=1,2$ denote the maps that project $B\in \mathcal{B}(A,s)$ onto the tangent spaces of the manifolds $\mathcal{M}_{1}$ and $\mathcal{M}_{2}$ at the intersection point
$\pi(B),$ respectively. Then the following results can be derived by  Propositions \ref{pro1} and \ref{pro2}.

\begin{lemma}[Lemma 4.2 in \cite{andersson2013alternating}]\label{lem2}
The functions $\rho_{1}$ and $\rho_{2}$ are $\mathbb{C}^{1}$-maps in $\mathcal{B}(A,s_{0}).$ Moreover, we can select a number $s^{\epsilon}_{1}<s^{\epsilon}_{0}$ such that the image of $\mathcal{B}(A,s_{1}^{\epsilon})$ under $\rho_{1}, \rho_{2}, \pi, \pi_{1},\pi_{2}$ as well as any composition
of two of those maps, is contained in $\mathcal{B}(A,s_{0}^{\epsilon})$.
\end{lemma}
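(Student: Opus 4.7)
The plan is to establish the $\mathbb{C}^1$ regularity of $\rho_1,\rho_2$ first, and then use continuity together with the fact that each of the five maps $\rho_1,\rho_2,\pi,\pi_1,\pi_2$ fixes $A$ to extract the radius $s_1^\epsilon$.

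For smoothness, I would sidestep the double appearance of $B$ in $\rho_j(B)=P_{T_{\mathcal{M}_j}(\pi(B))}(B)$ by introducing the bivariate map
\[
\Phi_j(C,B):=P_{T_{\mathcal{M}_j}(C)}(B),\qquad \rho_j(B)=\Phi_j(\pi(B),B).
\]
Since $\mathcal{M}_j$ is $\mathbb{C}^2$, the local parametrization $\phi$ is $\mathbb{C}^2$ and $d\phi$ is $\mathbb{C}^1$; the explicit formula $P_{T_{\mathcal{M}_j}(C)}=M(M^*M)^{-1}M^*$ with $M=d\phi(\phi^{-1}(C))$ (smoothness of $\phi^{-1}$ following from the inverse function theorem applied to the $\mathbb{C}^2$ immersion $\phi$) then shows that $\Phi_j$ is $\mathbb{C}^1$ in $C$ and linear, hence $\mathbb{C}^\infty$, in $B$, so jointly $\mathbb{C}^1$. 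Proposition \ref{pro2} provides $\pi\in\mathbb{C}^2(\mathcal{B}(A,s_0))$, and the chain rule then yields $\rho_j\in\mathbb{C}^1(\mathcal{B}(A,s_0))$. This is the main delicate point; everything downstream is soft analysis.

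For the image containment, I would observe that each of the five maps sends $A$ to $A$: since $A\in\mathcal{M}_1\cap\mathcal{M}_2$, Proposition \ref{pro2} gives $\pi(A)=\pi_1(A)=\pi_2(A)=A$, and since $A\in\widetilde{T}_{\mathcal{M}_j}(A)$ the projection onto the affine tangent space yields $\rho_j(A)=A$ as well. Each of these maps is continuous at $A$ (by Proposition \ref{pro2} for the projections, by the $\mathbb{C}^1$ claim just proved for the $\rho_j$), so for every $f$ in the list there exists $\delta_f>0$ with $f(\mathcal{B}(A,\delta_f))\subseteq\mathcal{B}(A,s_0^\epsilon)$. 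For each of the finitely many ordered pairs $(f,g)$ drawn from the same list, continuity of $g$ at $A=g(A)$ produces $\delta_{f,g}>0$ with $g(\mathcal{B}(A,\delta_{f,g}))\subseteq\mathcal{B}(A,\delta_f)$, and hence $(f\circ g)(\mathcal{B}(A,\delta_{f,g}))\subseteq\mathcal{B}(A,s_0^\epsilon)$. Taking $s_1^\epsilon$ to be the minimum of the $\delta_f$ and the $\delta_{f,g}$, chosen strictly less than $s_0^\epsilon$, yields the required radius.
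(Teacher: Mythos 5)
Your proposal is correct and follows exactly the route the paper intends: the paper imports this lemma from \cite{andersson2013alternating} without proof, remarking only that it follows from Propositions \ref{pro1} and \ref{pro2}, and your argument (smoothness of $C\mapsto P_{T_{\mathcal{M}_j}(C)}$ from the explicit formula plus $\pi\in\mathbb{C}^2$, then the observation that all five maps fix $A$ and are continuous there, so a finite minimum of radii handles the compositions) is precisely the standard filling-in of that remark. Your reading of $\rho_j$ as the projection onto the \emph{affine} tangent space $\widetilde{T}_{\mathcal{M}_j}(\pi(B))$ is also the correct interpretation of the paper's slightly abbreviated notation, since otherwise $\rho_j(A)=A$ would fail.
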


It follows from the definitions of $\rho_{j}$ and $\pi_{j}, j=1,2,$  that $\rho_{j}$ resembles $\pi_{j}$, but is slightly different. Andersson and Carlsson \cite{andersson2013alternating} estimated the difference between $\rho_{j}$ and $\pi_{j},j=1,2$, respectively.

\begin{lemma}[Proposition 4.3 in \cite{andersson2013alternating}]\label{jia2}
Suppose that $\epsilon>0$ with $\frac{1+\epsilon}{\sqrt{1-\epsilon^2}}<2$.
Given any $B\in \mathcal{B}(A,s^{\epsilon})$, we have
\begin{align*}
\|\pi_{j}(B)-\rho_{j}(B)\|_{F}<4\sqrt{\epsilon}\|B-\pi(B)\|_{F}, \quad j=1,2.
\end{align*}
\end{lemma}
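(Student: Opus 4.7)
The target bound carries the factor $\sqrt{\epsilon}$, which strongly suggests a squared-distance (Pythagorean) argument in which all raw error terms are $O(\epsilon)$ and the square root emerges only at the end. The plan is to compare $\pi_j(B)$ and $\rho_j(B)$ through a common intermediary on the affine tangent plane $\widetilde{T}_{\mathcal{M}_j}(C)$ at $C:=\pi(B)$, exploiting the two-sided approximation provided by Lemma \ref{jia1}: the manifold and its affine tangent plane lie within a factor $\epsilon$ of each other locally.

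I would begin by setting $d:=\|B-C\|_F$ and decomposing $B-C=v_t+v_n$ with $v_t\in T_{\mathcal{M}_j}(C)$ and $v_n\in T_{\mathcal{M}_j}(C)^\perp$, so that $\rho_j(B)=C+v_t$ and $\|B-\rho_j(B)\|_F=\|v_n\|_F$. The first ingredient is an upper bound on $\|B-\pi_j(B)\|_F$ coming from a competitor in $\mathcal{M}_j$: Lemma \ref{lem2} keeps $\rho_j(B)$ inside the ball where Lemma \ref{jia1} applies, and part (ii) of that lemma produces $D\in\mathcal{M}_j$ with $\|D-\rho_j(B)\|_F\le\epsilon\|v_t\|_F$. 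Since $\pi_j(B)$ is the nearest point in $\mathcal{M}_j$ to $B$,
\[
\|B-\pi_j(B)\|_F\le\|B-D\|_F\le\|v_n\|_F+\epsilon\|v_t\|_F,
\]
and squaring together with $\|v_t\|_F^2+\|v_n\|_F^2=d^2$ gives the first key estimate
\[
\|B-\pi_j(B)\|_F^2-\|B-\rho_j(B)\|_F^2\;\le\;\epsilon(1+\epsilon)\,d^2.
\]

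The second ingredient pulls $\pi_j(B)$ onto the tangent plane at $C$. Let $E$ be its orthogonal projection onto $\widetilde{T}_{\mathcal{M}_j}(C)$; then Lemma \ref{jia1}(i) gives $\|\pi_j(B)-E\|_F\le\epsilon\|\pi_j(B)-C\|_F\le 2\epsilon d$. The crucial Pythagorean identity now uses that $B-\rho_j(B)\in T_{\mathcal{M}_j}(C)^\perp$ while $\rho_j(B),E\in\widetilde{T}_{\mathcal{M}_j}(C)$, so $B-\rho_j(B)\perp \rho_j(B)-E$ and
\[
\|B-E\|_F^2\;=\;\|B-\rho_j(B)\|_F^2+\|\rho_j(B)-E\|_F^2.
\]
Combining with $\|B-E\|_F\le\|B-\pi_j(B)\|_F+2\epsilon d$ and the squared estimate above forces $\|\rho_j(B)-E\|_F^2=O(\epsilon)\,d^2$, and the final triangle inequality $\|\pi_j(B)-\rho_j(B)\|_F\le\|\pi_j(B)-E\|_F+\|E-\rho_j(B)\|_F$ produces a bound of the correct order $\sqrt{\epsilon}\,d$.

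The main obstacle will be arriving at the stated constant $4$ rather than some larger absolute constant: the naive chain above gives prefactor roughly $\sqrt{5(1+\epsilon)}+2\epsilon$, which at the edge of the allowed region sits just above $4$. Tightening this will require avoiding the pessimistic AM--GM step $2\|v_n\|_F\|v_t\|_F\le d^2$ and, in parallel, replacing the crude estimate $\|\pi_j(B)-C\|_F\le 2d$ by the sharper $\|\pi_j(B)-C\|_F\le\sqrt{(1+\epsilon)/(1-\epsilon)}\,d<2d$, which is precisely the content of the hypothesis $(1+\epsilon)/\sqrt{1-\epsilon^2}<2$ and is presumably how that condition enters the argument. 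A secondary bookkeeping point is verifying that all intermediate points $D$, $E$, and $\rho_j(B)$ remain inside $\mathcal{B}(A,s_0^\epsilon)$ so Lemma \ref{jia1} can be invoked at each step; this is exactly the role of Lemma \ref{lem2}.
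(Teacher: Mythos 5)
First, a point of reference: the paper does not prove this lemma at all --- it is imported verbatim as Proposition 4.3 of \cite{andersson2013alternating}, so there is no internal proof to compare against. Judged on its own terms, your strategy is the natural one and is essentially the route of the cited source: compare $\pi_j(B)$ and $\rho_j(B)$ through the affine tangent plane at $C=\pi(B)$, use Lemma \ref{jia1} in both directions, and extract $\sqrt{\epsilon}$ from a Pythagorean identity. Your two key estimates (the competitor bound $\|B-\pi_j(B)\|_F\le\|v_n\|_F+\epsilon\|v_t\|_F$ and the identity $\|B-E\|_F^2=\|B-\rho_j(B)\|_F^2+\|\rho_j(B)-E\|_F^2$) are both correct, and you are right that the bookkeeping of which points stay in the relevant ball is exactly what Lemma \ref{lem2} is for.

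The genuine gap is the constant, and your proposed repair does not close it. Your chain gives the prefactor $\sqrt{5(1+\epsilon)}+2\sqrt{\epsilon}$, which exceeds $4$ once $\epsilon\gtrsim 0.44$, whereas the hypothesis $\frac{1+\epsilon}{\sqrt{1-\epsilon^2}}<2$ only restricts to $\epsilon<3/5$; so the lemma is not proved on part of its stated range. You suggest sharpening $\|\pi_j(B)-C\|_F\le 2\|B-C\|_F$ to $\sqrt{(1+\epsilon)/(1-\epsilon)}\,\|B-C\|_F$, but that quantity equals $\frac{1+\epsilon}{\sqrt{1-\epsilon^2}}$ and tends to $2$ exactly at the edge $\epsilon=3/5$, so nothing is gained where it matters. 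The ingredient you are missing is the variational characterization in Proposition \ref{pro2}: $B-\pi_j(B)\perp T_{\mathcal{M}_j}(\pi_j(B))$. Applying Lemma \ref{jia1}(i) at the foot point $\pi_j(B)$ (not at $C$) to decompose $C-\pi_j(B)$ into its tangential and normal parts at $\pi_j(B)$, the normal part has norm at most $\epsilon\|C-\pi_j(B)\|_F$, hence
\begin{equation*}
\|B-C\|_F^2\;\ge\;\|B-\pi_j(B)\|_F^2+\|C-\pi_j(B)\|_F^2-2\epsilon\|B-\pi_j(B)\|_F\|C-\pi_j(B)\|_F\;\ge\;(1-\epsilon^2)\|C-\pi_j(B)\|_F^2,
\end{equation*}
giving $\|\pi_j(B)-C\|_F\le(1-\epsilon^2)^{-1/2}\|B-C\|_F$, which is at most $1.25\,\|B-C\|_F$ on the whole admissible range. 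Feeding this into your second ingredient in place of the factor $2$ brings the final prefactor below $4$ for all $\epsilon<3/5$. Without some such use of the orthogonality at $\pi_j(B)$, the argument only establishes the lemma with a larger absolute constant (which, to be fair, is all the rest of the paper actually needs, but it is not the stated claim).
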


Different from the results given in Lemma \ref{jia2},
we need to estimate the distances of $\pi_{j}(B)$ and $P_{T_{\mathcal{M}_{j}}(C_{j})}(B),$
where $C_{j}$ is a point on the manifold $\mathcal{M}_{j}$  and not necessary the
intersection point of $\mathcal{M}_{1}$ and $ \mathcal{M}_{2}$ for $j=1,2$.  We only list the results where the proof is similar to
the proof of Lemma \ref{jia2} given in \cite{andersson2013alternating}.

\begin{lemma}\label{ne1}
For each $\epsilon>0$ with $\frac{1+\epsilon}{\sqrt{1-\epsilon^2}}<2$.
Given any $B\in \mathcal{B}(A,s^{\epsilon})$ and $C_{j}\in \mathcal{M}_{j}$,
we have
\begin{align*}
\|\pi_{j}(B)-P_{T_{\mathcal{M}_{j}}(C_{j})}(B)\|_{F}<4\sqrt{\epsilon}\|B-C_{j}\|_{F},
\quad j=1,2.
\end{align*}
\end{lemma}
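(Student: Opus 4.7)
The plan is to imitate the proof of Lemma \ref{jia2} (Proposition 4.3 of \cite{andersson2013alternating}), simply replacing the intersection point $\pi(B)$ by the given point $C_{j}$. The only property of $\pi(B)$ really used in that proof is that $\pi(B)\in\mathcal{M}_{j}$; its membership in $\mathcal{M}_{3-j}$ plays no role in this particular estimate. Since $C_{j}\in\mathcal{M}_{j}$ by hypothesis, the whole argument carries over with $\|B-\pi(B)\|_{F}$ replaced everywhere by $\|B-C_{j}\|_{F}$.

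Concretely, set $E:=\pi_{j}(B)$ and $F:=P_{T_{\mathcal{M}_{j}}(C_{j})}(B)$, with the latter interpreted as the projection of $B$ onto the affine tangent space $\tilde{T}_{\mathcal{M}_{j}}(C_{j})$ so that $C_{j}\in\mathcal{M}_{j}\cap\tilde{T}_{\mathcal{M}_{j}}(C_{j})$. The minimizing properties of $E$ and $F$ immediately give $\|B-E\|_{F},\|B-F\|_{F}\leq\|B-C_{j}\|_{F}$ and hence $\|E-C_{j}\|_{F},\|F-C_{j}\|_{F}\leq 2\|B-C_{j}\|_{F}$. Applying Lemma \ref{jia1}(i) to $E\in\mathcal{M}_{j}$ then produces $\tilde{E}\in\tilde{T}_{\mathcal{M}_{j}}(C_{j})$ with $\|E-\tilde{E}\|_{F}\leq 2\epsilon\|B-C_{j}\|_{F}$, and Lemma \ref{jia1}(ii) applied to $F\in\tilde{T}_{\mathcal{M}_{j}}(C_{j})$ produces $\tilde{F}\in\mathcal{M}_{j}$ with $\|F-\tilde{F}\|_{F}\leq 2\epsilon\|B-C_{j}\|_{F}$. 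Crucially, the minimality of $F$ over the tangent plane and of $E$ over the manifold yield $\|B-F\|_{F}\leq\|B-\tilde{E}\|_{F}$ and $\|B-E\|_{F}\leq\|B-\tilde{F}\|_{F}$.

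The final estimate is a Pythagorean computation. Since $E-\tilde{E}\perp T_{\mathcal{M}_{j}}(C_{j})$ while $\tilde{E}-F$ lies in $T_{\mathcal{M}_{j}}(C_{j})$ (both $\tilde{E}$ and $F$ sit on the affine tangent plane), the orthogonal decomposition
\[
\|E-F\|_{F}^{2}=\|E-\tilde{E}\|_{F}^{2}+\|\tilde{E}-F\|_{F}^{2}
\]
holds. The first term is $\leq 4\epsilon^{2}\|B-C_{j}\|_{F}^{2}$. For the second, another application of Pythagoras (using $B-F\perp T_{\mathcal{M}_{j}}(C_{j})$) gives $\|\tilde{E}-F\|_{F}^{2}=\|B-\tilde{E}\|_{F}^{2}-\|B-F\|_{F}^{2}$, and expanding $\|B-\tilde{E}\|_{F}\leq\|B-E\|_{F}+2\epsilon\|B-C_{j}\|_{F}$ together with $\|B-E\|_{F}\leq\|B-F\|_{F}+2\epsilon\|B-C_{j}\|_{F}$ yields $\|\tilde{E}-F\|_{F}^{2}\leq 8\epsilon(1+\epsilon)\|B-C_{j}\|_{F}^{2}$. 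Summing, $\|E-F\|_{F}^{2}\leq(8\epsilon+12\epsilon^{2})\|B-C_{j}\|_{F}^{2}$, and the hypothesis $\tfrac{1+\epsilon}{\sqrt{1-\epsilon^{2}}}<2$ (equivalently $\epsilon<3/5$) is exactly what is needed to collapse the right-hand side to $<16\epsilon\|B-C_{j}\|_{F}^{2}$, giving the claimed strict bound.

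The main obstacle is not the algebra but the bookkeeping: one has to choose $s^{\epsilon}$ small enough (depending on $A$ and $\epsilon$) so that $B$, $C_{j}$, and all the auxiliary points $E,\tilde{E},F,\tilde{F}$ lie inside the common ball on which Lemma \ref{jia1} is valid, and one must use the constraint $\epsilon<3/5$ to absorb the lower-order $12\epsilon^{2}$ term into the sharp constant $4$. Both steps are identical to those in the proof of Lemma \ref{jia2} and the shrinkage procedure behind Lemma \ref{lem2}, which is why the author notes that only the statement, not the proof, needs to be recorded here.
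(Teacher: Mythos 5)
Your proposal is correct and is exactly what the paper intends: the paper omits the proof of Lemma \ref{ne1}, stating only that it is ``similar to the proof of Lemma \ref{jia2}'' (Proposition 4.3 of Andersson--Carlsson), and your argument is precisely that adaptation, with $\pi(B)$ replaced by $C_j$ and the only properties used being $C_j\in\mathcal{M}_j$ and $C_j\in\tilde{T}_{\mathcal{M}_j}(C_j)$. Your constants also check out: $\|E-F\|_F^2\le(8\epsilon+12\epsilon^2)\|B-C_j\|_F^2<16\epsilon\|B-C_j\|_F^2$ since the hypothesis forces $\epsilon<3/5<2/3$.
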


The following results are  used as the main tool to prove the convergence of the Alternating Projection
method given in \cite{andersson2013alternating}.

\begin{lemma}[Theorem 4.1 in \cite{andersson2013alternating}]\label{an3}
For each $\epsilon>0$,  there exist an $s>0$  such that for all $B\in \mathcal{B}(A,s)$,
we have
\begin{align}\label{an4}
\|\pi(\pi_{j}(B))-\pi(B)\|_{F}< \epsilon \|B-\pi(B)\|_{F}, \quad j=1,2.
\end{align}
\end{lemma}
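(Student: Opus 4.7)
The plan is to split
\begin{equation*}
\|\pi(\pi_j(B)) - \pi(B)\|_F \leq \|\pi(\pi_j(B)) - \pi(\rho_j(B))\|_F + \|\pi(\rho_j(B)) - \pi(B)\|_F
\end{equation*}
and estimate the two summands separately, using Lipschitz continuity of $\pi$ together with Lemma \ref{jia2} for the first piece, and the subspace-containment $T_{\mathcal{M}}(\pi(B))\subseteq T_{\mathcal{M}_{j}}(\pi(B))$ (inherited from $\mathcal{M}\subseteq\mathcal{M}_{j}$) for the second. The overall target is to show that each piece can be made smaller than $\tfrac{\epsilon}{2}\|B-\pi(B)\|_F$ by shrinking $s$ and an auxiliary parameter $\eta>0$.

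For the first summand, Lemma \ref{jia2} applied with a sufficiently small $\eta>0$ gives $\|\pi_j(B)-\rho_j(B)\|_F < 4\sqrt{\eta}\,\|B-\pi(B)\|_F$. Proposition \ref{pro2} asserts that $\pi$ is $\mathbb{C}^{2}$ and hence locally Lipschitz with some constant $L$, and Lemma \ref{lem2} places both $\pi_j(B)$ and $\rho_j(B)$ inside its domain. Therefore
\begin{equation*}
\|\pi(\pi_j(B))-\pi(\rho_j(B))\|_F \leq L\|\pi_j(B)-\rho_j(B)\|_F < 4L\sqrt{\eta}\,\|B-\pi(B)\|_F,
\end{equation*}
which is forced below $\tfrac{\epsilon}{2}\|B-\pi(B)\|_F$ by choosing $\eta$ small in terms of $\epsilon$ and $L$.

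For the second summand I would exploit the linear-algebraic identity that, in the tangent-space picture, the composition vanishes exactly. Writing $\rho_j(B)-\pi(B)=P_{T_{\mathcal{M}_j}(\pi(B))}(B-\pi(B))$ and recalling from Proposition \ref{pro2} that $B-\pi(B)\perp T_{\mathcal{M}}(\pi(B))$, the inclusion $T_{\mathcal{M}}(\pi(B))\subseteq T_{\mathcal{M}_{j}}(\pi(B))$ gives $T_{\mathcal{M}_{j}}(\pi(B))^{\perp}\subseteq T_{\mathcal{M}}(\pi(B))^{\perp}$, and a direct computation then yields $P_{T_{\mathcal{M}}(\pi(B))}\bigl(\rho_j(B)-\pi(B)\bigr)=0$. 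Since the differential of the nearest-point map $\pi$ at $\pi(B)\in\mathcal{M}$ is the orthogonal projection $P_{T_{\mathcal{M}}(\pi(B))}$, Taylor expansion of the $\mathbb{C}^{2}$ map $\pi$ around $\pi(B)$ gives
\begin{equation*}
\|\pi(\rho_j(B))-\pi(B)\|_F = O\!\left(\|\rho_j(B)-\pi(B)\|_F^{2}\right) \leq C\|B-\pi(B)\|_F^{2} \leq Cs\,\|B-\pi(B)\|_F,
\end{equation*}
which is also below $\tfrac{\epsilon}{2}\|B-\pi(B)\|_F$ once $s$ is small enough.

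The step I expect to be most delicate is the Taylor expansion: making the constant $C$ uniform over $B\in\mathcal{B}(A,s)$ requires a basepoint-uniform bound on the second derivative of $\pi$, which rests on Proposition \ref{pro1} (continuity of $P_{T_{\mathcal{M}}}$) together with compactness of the closed ball. A cleaner alternative that stays within the estimates already assembled is to invoke Lemma \ref{ne1} (whose proof applies verbatim with $\mathcal{M}$ in place of $\mathcal{M}_{j}$) with base point $C=\pi(B)$ and input $\rho_j(B)$, which gives $\|\pi(\rho_j(B))-P_{\widetilde{T}_{\mathcal{M}}(\pi(B))}(\rho_j(B))\|_F < 4\sqrt{\eta}\,\|\rho_j(B)-\pi(B)\|_F$; the right-hand tangent-space projection already equals $\pi(B)$ by the linear-algebra calculation above, so the second summand is absorbed into a $\sqrt{\eta}$ bound as well. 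Either route closes the argument after choosing $s$ and $\eta$ in terms of $\epsilon$, $L$, and $C$.
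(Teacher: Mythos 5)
The paper never proves this lemma: it is imported verbatim as Theorem 4.1 of \cite{andersson2013alternating}, so there is no in-paper proof to compare yours against. Judged on its own terms, your argument is essentially correct, and it rests on the same two facts that drive the original proof: that the differential of the nearest-point map $\pi$ at a point $p\in\mathcal{M}$ is $P_{T_{\mathcal{M}}(p)}$, and that the nesting $T_{\mathcal{M}}(\pi(B))\subseteq T_{\mathcal{M}_{j}}(\pi(B))$ together with $B-\pi(B)\perp T_{\mathcal{M}}(\pi(B))$ (Proposition \ref{pro2}) forces $P_{T_{\mathcal{M}}(\pi(B))}\bigl(\rho_j(B)-\pi(B)\bigr)=P_{T_{\mathcal{M}}(\pi(B))}(B-\pi(B))=0$, since $P_{S}P_{T}=P_{S}$ for nested subspaces $S\subseteq T$. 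The splitting through $\pi(\rho_j(B))$, with Lemma \ref{jia2} plus local Lipschitzness of $\pi$ controlling the first term and $\|\rho_j(B)-\pi(B)\|_F\le\|B-\pi(B)\|_F$ feeding the second, closes the estimate.

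Three points need tightening, none fatal. First, $d\pi_p=P_{T_{\mathcal{M}}(p)}$ is true but is stated nowhere in the paper; it deserves a one-line justification (tangential directions are fixed because $\pi$ restricts to the identity on $\mathcal{M}$; normal perturbations move $\pi$ only to second order). Second, mind the order of quantifiers: the bound $4\sqrt{\eta}\,\|B-\pi(B)\|_F$ of Lemma \ref{jia2} holds only on a ball whose radius shrinks with $\eta$, while your Lipschitz constant $L$ must be fixed on a ball chosen before $\eta$; your plan (take $L$ from the fixed $s_0$ of Lemma \ref{lem2}, then pick $\eta$, then shrink $s$) works but should be written out. Third, the uniform second-order Taylor remainder is indeed the weak link of your first route; your fallback via Lemma \ref{ne1} applied to $\mathcal{M}$ with base point $\pi(B)$ is cleaner, but Lemma \ref{ne1} is itself stated in the paper only for $\mathcal{M}_1,\mathcal{M}_2$ and proved nowhere here, so ``applies verbatim'' is an assertion you would still owe, using the $C^2$ hypothesis on $\mathcal{M}=\mathcal{M}_1\cap\mathcal{M}_2$ that Theorem \ref{thm_convergence} assumes.
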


We remark that for a given $\epsilon$ that the number $s$ given in Lemma \ref{an3}
may be different between the manifolds $\mathcal{M}_{1}$ and $\mathcal{M}_{2}$. Here we choose
$s$ such that \eqref{aff1} holds in all cases.
Moreover, the roles of $\pi_{1}$ and $\pi_{2}$, $\mathcal{M}_{1}$ and $\mathcal{M}_{2}$ in the proposed TAP method are equivalent, so
we choose $\pi_{1}$ and $\mathcal{M}_{1}$ as a special case. Then we can get the following results.

\begin{lemma}\label{new1}
For each $\epsilon>0$ given in Lemma \ref{ne1}, there exist $\alpha(\epsilon)>0,$
$\beta(\epsilon)>0$ and $s>0$
such that for all $B\in \mathcal{B}(A,s)$,
\begin{align}
\|\pi(\pi_{1}(P_{T_{\mathcal{M}_{1}}(C)}(B)))-\pi(B)\|_{F}<
\alpha(\epsilon) \|B-\pi(B)\|_{F}+ \beta(\epsilon) \|C-\pi(B)\|_{F},
\end{align}
where $C\in \mathcal{M}_{1}\cap \mathcal{B}(A,s)$,
$\pi_{1},\pi$ and $P_{T_{\mathcal{M}_{1}}(C)}$ stands for the projection
onto $\mathcal{M}_{1}$, $\mathcal{M}$ and the tangent space
$T_{\mathcal{M}_{1}}(C),$  respectively.
\end{lemma}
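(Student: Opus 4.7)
The plan is to reduce the statement to Lemma \ref{an3}, which already controls the classical alternating composition $\pi\circ\pi_1$, by treating the tangent-space projection as a perturbation whose size is measured by Lemma \ref{ne1}. First I would insert $\pi(\pi_1(B))$ as a waypoint and split via the triangle inequality:
\begin{align*}
\|\pi(\pi_1(P_{T_{\mathcal{M}_1}(C)}(B))) - \pi(B)\|_F
&\le \|\pi(\pi_1(P_{T_{\mathcal{M}_1}(C)}(B))) - \pi(\pi_1(B))\|_F \\
&\quad + \|\pi(\pi_1(B)) - \pi(B)\|_F .
\end{align*}
The second summand is bounded directly by Lemma \ref{an3}, contributing $\epsilon\|B-\pi(B)\|_F$.

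For the first summand, set $D := P_{T_{\mathcal{M}_1}(C)}(B)$. Lemma \ref{ne1} applied with $C_1 = C$ yields $\|D - \pi_1(B)\|_F < 4\sqrt{\epsilon}\,\|B - C\|_F$. Because $\pi$ and $\pi_1$ are $\mathbb{C}^2$ by Proposition \ref{pro2}, on a small enough ball about $A$ they are Lipschitz with some constants $L$ and $L'$. Exploiting the idempotency $\pi_1(\pi_1(B)) = \pi_1(B)$ to avoid introducing $\|D-B\|_F$, I chain the Lipschitz bounds
\begin{align*}
\|\pi(\pi_1(D)) - \pi(\pi_1(B))\|_F
&\le L\|\pi_1(D) - \pi_1(B)\|_F \\
&\le LL'\|D - \pi_1(B)\|_F \\
&< 4LL'\sqrt{\epsilon}\,\|B - C\|_F .
\end{align*}
A last triangle inequality $\|B - C\|_F \le \|B - \pi(B)\|_F + \|\pi(B) - C\|_F$ packages everything into the required form with
\[
\alpha(\epsilon) = \epsilon + 4LL'\sqrt{\epsilon}, \qquad \beta(\epsilon) = 4LL'\sqrt{\epsilon} .
\]
Notably both coefficients vanish as $\epsilon\to 0$, which is what the subsequent iteration analysis will require for a contraction to emerge.

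The main technical obstacle will be bookkeeping of radii. Each of Lemma \ref{lem2}, \ref{ne1}, \ref{an3} and Proposition \ref{pro2} supplies its own threshold, and the Lipschitz constants $L, L'$ additionally demand a compact neighborhood on which the Jacobians of $\pi$ and $\pi_1$ are uniformly bounded. I would take $s$ to be the minimum of all these thresholds, shrunk further via Lemma \ref{lem2} so that the intermediate iterates $\pi_1(B)$, $D$, and $\pi_1(D)$ all remain inside a single ball on which every local estimate is simultaneously valid. With that uniform control the chain above is legitimate and the lemma follows.
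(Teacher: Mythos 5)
Your proof is correct and follows essentially the same route as the paper's: split at $\pi(\pi_{1}(B))$, bound that piece by Lemma \ref{an3}, bound the other piece via the local Lipschitz property of $\pi$ combined with Lemma \ref{ne1}, and finish with the triangle inequality $\|B-C\|_{F}\leq \|B-\pi(B)\|_{F}+\|C-\pi(B)\|_{F}$. The only (immaterial) difference is in bounding $\|\pi_{1}(D)-\pi_{1}(B)\|_{F}$ for $D=P_{T_{\mathcal{M}_{1}}(C)}(B)$: you invoke the Lipschitz constant of $\pi_{1}$, whereas the paper uses the best-approximation property $\|\pi_{1}(D)-D\|_{F}\leq \|D-\pi_{1}(B)\|_{F}$ to get a factor $2$ instead of $L'$; both yield coefficients that vanish as $\epsilon\to 0$, which is all the subsequent convergence argument requires.
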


\begin{proof}
From Figure \ref{figure1}, we know that  in the third step of the TAP method, the given point is projected onto the tangent space of  $C\in \mathcal{M}_{1},$ i.e., $T_{\mathcal{M}_{1}}(C),$ instead of  onto $\mathcal{M}_{1}$ directly.
It follows Lemma \ref{ne1} that  for a given $\epsilon>0$ with $\frac{1+\epsilon}{\sqrt{1-\epsilon^2}}<2,$ there exist an $s^{\epsilon}>0$ such that for any  $B\in \mathcal{B}(A,s^{\epsilon}),$ $\pi_{1}(P_{T_{\mathcal{M}_{1}}(C)}(B))$ resembles $\pi_{1}(B)$, i.e.,
\begin{align*}
\|\pi_{1}(B)-P_{T_{\mathcal{M}_{1}}(C)}(B)\|_{F}<4\sqrt{\epsilon}\|B-C\|_{F}.
\end{align*}
Recall Lemma \ref{an3} and note that $\pi_{1}(B)\in \mathcal{M}_{1}$, then for each $\varepsilon^{\epsilon}$ there exist an $s^{\epsilon}>0$  such that for all $B\in \mathcal{B}(A,s^{\epsilon})$, we have
\begin{align*}
\|\pi(\pi_{1}(B))-\pi(B)\|_{F}<\varepsilon^{\epsilon} \|B-\pi(B)\|_{F}.
\end{align*}
$\pi_{1}(P_{T_{\mathcal{M}_{1}}(C)}(B))$ and $\pi_{1}(B)$ are all on the manifold $\mathcal{M}_{1},$ and $\pi_{1}(P_{T_{\mathcal{M}_{1}}(C)}(B))$ is the closest point to  $P_{T_{\mathcal{M}_{1}}(C)}(B)$ on the manifold $\mathcal{M}_{1},$ then
 \begin{align}\label{new3}
 \|P_{T_{\mathcal{M}_{1}}(C)}(B)-\pi_{1}(P_{T_{\mathcal{M}_{1}}(C)}(B))\|_{F}\leq \|P_{T_{\mathcal{M}_{1}}(C)}(B)-\pi_{1}(B)\|_{F}.
 \end{align}
 It follows from Lemma \ref{lem2} that there exist an $s^{\varepsilon}$ such that $\pi$ is $\mathbb{C}^2$ in $\mathcal{B}(A, s^{\varepsilon}).$
 Choose $s=\min(s^{\varepsilon},s^{\epsilon}),$ then for all $B\in \mathcal{B}(A,s)$,
\begin{align*}
      &\|\pi(\pi_{1}(P_{T_{\mathcal{M}_{1}}(C)}(B)))-\pi(B)\|_{F}\\
     =~&\|\pi(\pi_{1}(P_{T_{\mathcal{M}_{1}}(C)}(B)))-\pi(\pi_{1}(B))+\pi(\pi_{1}(B))-\pi(B)\|_{F}\\
 \leq ~&\|\pi(\pi_{1}(P_{T_{\mathcal{M}_{1}}(C_)}(B)))-\pi(\pi_{1}(B))\|_{F}+\|\pi(\pi_{1}(B))-\pi(B)\|_{F}\\
 \leq ~& \alpha\|\pi_{1}(P_{T_{\mathcal{M}_{1}}(C)}(B))-\pi_{1}(B)\|_{F}+\varepsilon\|B-\pi(B)\|_{F}\\
 \leq ~& \alpha\|\pi_{1}(P_{T_{\mathcal{M}_{1}}(C)}(B))-P_{T_{\mathcal{M}_{1}}(C)}(B)+P_{T_{\mathcal{M}_{1}}(C)}(B)-\pi_{1}(B)\|_{F}+\varepsilon\|B-\pi(B)\|_{F}\\
  \leq ~& \alpha\|\pi_{1}(P_{T_{\mathcal{M}_{1}}(C)}(B))-P_{T_{\mathcal{M}_{1}}(C)}(B)\|_{F}
  +\alpha\|P_{T_{\mathcal{M}_{1}}(C)}(B)-\pi_{1}(B)\|_{F}+\varepsilon\|B-\pi(B)\|_{F}\\
  \leq~& 2\alpha\|P_{T_{\mathcal{M}_{1}}(C)}(B)-\pi_{1}(B)\|_{F}+\varepsilon\|B-\pi(B)\|_{F}\\
  \leq ~&8\alpha\sqrt{\epsilon}\|B-C\|_{F}+\varepsilon\|B-\pi(B)\|_{F}\\
  \leq ~&8\alpha\sqrt{\epsilon}\|C-\pi(B))\|_{F}+(\varepsilon+8\alpha\sqrt{\epsilon})\|B-\pi(B)\|_{F}\\
  \leq~&\varepsilon^{\epsilon}_{1}\|B-\pi(B)\|_{F}+\varepsilon^{\epsilon}_{2}\|C-\pi(B)\|_{F}.
\end{align*}
The first part of the second inequality follows by the continuity of  $\pi,$ and the second part follows by Lemma \ref{an3}.
The first part of the fifth inequality as well as the sixth inequality follows by \eqref{new3} and Lemma  \ref{ne1}, respectively. Then the last inequality can be derived  by choosing
$\varepsilon+8\alpha\sqrt{\epsilon}\leq \varepsilon^{\epsilon}_{1}$ and $8\alpha\sqrt{\epsilon}\leq \varepsilon^{\epsilon}_{2}$.
 In particular, if $B=\pi_{2}(C)$, we can get $\varepsilon^{\epsilon}_{1}=\varepsilon$ and $\varepsilon^{\epsilon}_{2}=8\alpha\sqrt{\epsilon}$.
\end{proof}

 Recall the function $\sigma(A)$ given in \eqref{df1},  the following results show that the distances between $\pi_{j}(B)$ ($j=1,2$) and $\pi(B)$ are reduced in proportion to the angle between $\mathcal{M}_{1}$ and $\mathcal{M}_{2}$.

\begin{lemma}[Theorem 4.5 in \cite{andersson2013alternating}]\label{an5}
For each $c>\sigma(A),$  there exist an $s>0$ such that for all $B\in\mathcal{M}_{2}\cap \mathcal{B}(A,s)$,
we have
\begin{align}
\|\pi_{1}(B)-\pi(B)\|_{F}<c\|B-\pi(B)\|_{F}.
\end{align}
Moreover, the same holds true with the roles of $\mathcal{M}_{1}$ and $\mathcal{M}_{2}$ reversed.
\end{lemma}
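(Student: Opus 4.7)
The plan is to reduce the bound to a cosine-of-angle estimate between the two tangent spaces $T_{\mathcal{M}_{1}}(\pi(B))$ and $T_{\mathcal{M}_{2}}(\pi(B))$ restricted to the orthogonal complement of $T_{\mathcal{M}}(\pi(B))$; by the definition \eqref{df1} this quantity is governed by $\sigma(A)$. Fix $\epsilon>0$ small (to be tuned from $c-\sigma(A)$) and take $s>0$ small enough that Propositions \ref{pro1}, \ref{pro2} and Lemmas \ref{jia1}, \ref{jia2}, \ref{lem2} all apply on $\mathcal{B}(A,s)$. For $B\in \mathcal{M}_{2}\cap \mathcal{B}(A,s)$ I would split
\[
\pi_{1}(B)-\pi(B) = \bigl(\pi_{1}(B)-\rho_{1}(B)\bigr) + \bigl(\rho_{1}(B)-\pi(B)\bigr),
\]
using Lemma \ref{jia2} to bound the first summand by $4\sqrt{\epsilon}\|B-\pi(B)\|_{F}$, so that only the second summand $\|\rho_{1}(B)-\pi(B)\|_{F}$ remains.

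Writing $b:=B-\pi(B)$, I have $\rho_{1}(B)-\pi(B)=P_{T_{\mathcal{M}_{1}}(\pi(B))}(b)$. Proposition \ref{pro2} gives $b\perp T_{\mathcal{M}}(\pi(B))$, and since $T_{\mathcal{M}}(\pi(B))\subseteq T_{\mathcal{M}_{1}}(\pi(B))$ the projection $P_{T_{\mathcal{M}_{1}}(\pi(B))}(b)$ is also orthogonal to $T_{\mathcal{M}}(\pi(B))$. Because $B,\pi(B)\in\mathcal{M}_{2}$, Lemma \ref{jia1}(i) places $b$ within $\epsilon\|b\|_{F}$ of $T_{\mathcal{M}_{2}}(\pi(B))$. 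Up to an $O(\epsilon)$ perturbation the task reduces to bounding $\|P_{T_{\mathcal{M}_{1}}(\pi(B))}(b)\|_{F}$ for a unit vector $b\in T_{\mathcal{M}_{2}}(\pi(B))\cap T_{\mathcal{M}}(\pi(B))^{\perp}$. Using self-adjointness and idempotence of the projection and Cauchy--Schwarz,
\[
\|P_{T_{\mathcal{M}_{1}}(\pi(B))}(b)\|_{F}^{2} = \langle P_{T_{\mathcal{M}_{1}}(\pi(B))}(b),\,b\rangle \leq \tau\,\|P_{T_{\mathcal{M}_{1}}(\pi(B))}(b)\|_{F}\,\|b\|_{F},
\]
where $\tau$ denotes the supremum of $|\langle b_{1},b_{2}\rangle|$ over unit $b_{j}\in T_{\mathcal{M}_{j}}(\pi(B))\cap T_{\mathcal{M}}(\pi(B))^{\perp}$, so that $\|\rho_{1}(B)-\pi(B)\|_{F}\leq \tau\|b\|_{F}$.

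It remains to see $\tau\leq \sigma(A)+O(\sqrt{\epsilon})$. By the definition \eqref{df1}, $\sigma(A)$ is a limit supremum of analogous inner products taken over secants $B_{j}-A$ with $B_{j}\in\mathcal{M}_{j}$; Lemma \ref{jia1}(ii) lets me approximate each unit tangent vector at $\pi(B)$ by such a secant within $\epsilon$, while Proposition \ref{pro1} gives continuity of the orthogonal projections as $\pi(B)\to A$. Collecting the two summands yields
\[
\|\pi_{1}(B)-\pi(B)\|_{F} \leq \bigl(\sigma(A) + C\sqrt{\epsilon}\bigr)\|B-\pi(B)\|_{F}
\]
for a universal constant $C$, and choosing $\epsilon$ small (then $s$ small) forces the coefficient strictly below $c$. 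The statement with the roles of $\mathcal{M}_{1}$ and $\mathcal{M}_{2}$ exchanged follows by the same argument.

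The main obstacle is the simultaneous bookkeeping of three distinct approximations: the gap between $\pi_{1}$ and $\rho_{1}$ from Lemma \ref{jia2}, the gap between the secant $b$ and the linear tangent space $T_{\mathcal{M}_{2}}(\pi(B))$ from Lemma \ref{jia1}(i), and the drift in the tangent-space angle as the base point moves from $\pi(B)$ to $A$. Each contributes an error of order $\sqrt{\epsilon}$, and one must track them all so that their cumulative contribution added to the dominant term $\sigma(A)$ remains strictly below the prescribed $c$.
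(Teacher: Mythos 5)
The paper does not actually prove this lemma: it is imported verbatim as Theorem~4.5 of \cite{andersson2013alternating}, so there is no in-paper proof to match against. The closest surrogate is the proof of Lemma~\ref{new2}, which the authors explicitly model on that theorem's proof, and its skeleton is a \emph{multiplicative} decomposition: with $B'=\rho_{2}(B)$, $D=\pi_{1}(B)$, $D'=\rho_{1}(B')$, one writes $\|D\|/\|B\|$ as the product of the three ratios $\|D\|/\|D'\|$, $\|B'\|/\|B\|$, $\|D'\|/\|B'\|$, bounds the first two by $1+O(\sqrt{\epsilon})$ (with a case split on whether the first exceeds some $c_{2}>1$), and identifies the third as the cosine of the tangent-space angle, bounded by $c_{1}$ via $\sup\{\sigma(Z):Z\in\mathcal{M}\cap\mathcal{B}(A,s_{1})\}<c_{1}$. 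Your proof is an \emph{additive} variant of the same idea: you peel off $\pi_{1}(B)-\rho_{1}(B)$ with Lemma~\ref{jia2}, then bound $\|P_{T_{\mathcal{M}_{1}}(\pi(B))}(b)\|_{F}\le\tau\|b\|_{F}$ directly via self-adjointness and Cauchy--Schwarz after noting $b\perp T_{\mathcal{M}}(\pi(B))$ and $b$ is $\epsilon$-close to $T_{\mathcal{M}_{2}}(\pi(B))$. Both routes rest on the same two pillars (linearization errors from Lemmas~\ref{jia1}--\ref{jia2}, plus the tangent-space angle), and yours is arguably cleaner since it avoids the ratio bookkeeping and the $c_{2}$ case split.

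The one place you are thin is the step $\tau\le\sigma(A)+O(\sqrt{\epsilon})$, which is the entire analytic content of the lemma. The definition \eqref{df1} of $\sigma(A)$ is in terms of secants $B_{j}-A$ at the point $A$ that are orthogonal to $T_{\mathcal{M}}(A)$, whereas your $\tau$ is defined via tangent vectors at the nearby point $\pi(B)$ orthogonal to $T_{\mathcal{M}}(\pi(B))$. Bridging these requires (a) the secant-to-tangent identification of $\sigma$ at a fixed point (your appeal to Lemma~\ref{jia1}(ii) produces a nearby manifold point, but you must also correct it to satisfy the orthogonality constraint $B_{1}-\pi(B)\perp T_{\mathcal{M}}(\pi(B))$ required for admissibility in \eqref{df1}), and (b) upper semicontinuity of $\sigma$ along $\mathcal{M}$ so that $\sigma(\pi(B))<c_{1}$ uniformly for $\pi(B)$ near $A$ --- this is exactly the $\sup\{\sigma(Z)\}<c_{1}$ device in the proof of Lemma~\ref{new2}, and in Andersson--Carlsson it is a separate lemma. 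Your sentence acknowledges the drift but does not supply either argument; as written the proposal is a correct outline with this crux delegated to a one-line gesture rather than a proof.
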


In our case, the given point $B$ is firstly projected onto the tangent space of $\mathcal{M}_{1}$ at $C$ by the projector $P_{T_{\mathcal{M}_{1}}(C)},$ and then the derived point $P_{T_{\mathcal{M}_{1}}(C)}(B)$ is projected from the tangent space to the manifold to get $\pi_{1}(P_{T_{\mathcal{M}_{1}}(C)}(B)).$  Then the distance between  $\pi_{1}(P_{T_{\mathcal{M}_{1}}(C)}(B))$ and $\pi(B)$,
 can be estimated as follows.

\begin{lemma}\label{new2}
Suppose that $C\in \mathcal{M}_{1}$ and $B=\pi_{2}(C)\in \mathcal{M}_{2},$ for each $c>\sigma(A),$  there exist an $s>0$ such that for all $P_{T_{\mathcal{M}_{1}}(C)}(B)\in T_{\mathcal{M}_{1}}(C)\cap \mathcal{B}(A,s)$,
we have
\begin{align}
\|\pi_{1}(P_{T_{\mathcal{M}_{1}}(C)}(B))-\pi(B)\|_{F}<c\|B-\pi(B)\|_{F}.
\end{align}
Moreover, the same holds true with the roles of $\mathcal{M}_{1}$ and $\mathcal{M}_{2}$ reversed.
\end{lemma}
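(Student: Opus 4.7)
The plan is to view Lemma \ref{new2} as a tangent-space perturbation of Lemma \ref{an5}, combining the best-approximation characterization of $\pi_{1}$ with the closeness estimate in Lemma \ref{ne1}. Concretely, I would begin with the triangle inequality
$$
\|\pi_{1}(P_{T_{\mathcal{M}_{1}}(C)}(B))-\pi(B)\|_{F}\le
\|\pi_{1}(P_{T_{\mathcal{M}_{1}}(C)}(B))-\pi_{1}(B)\|_{F}
+\|\pi_{1}(B)-\pi(B)\|_{F}.
$$
Fix $c_{1}\in(\sigma(A),c)$. Since $B=\pi_{2}(C)\in\mathcal{M}_{2}$, Lemma \ref{an5} immediately gives $\|\pi_{1}(B)-\pi(B)\|_{F}<c_{1}\|B-\pi(B)\|_{F}$ once $B$ lies in a small enough ball around $A$.

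For the first summand, I would use that $\pi_{1}(B)$ is an element of $\mathcal{M}_{1}$ competing in the minimization defining $\pi_{1}(P_{T_{\mathcal{M}_{1}}(C)}(B))$, yielding
$$
\|\pi_{1}(P_{T_{\mathcal{M}_{1}}(C)}(B))-P_{T_{\mathcal{M}_{1}}(C)}(B)\|_{F}\le
\|\pi_{1}(B)-P_{T_{\mathcal{M}_{1}}(C)}(B)\|_{F}.
$$
Combining with the triangle inequality and Lemma \ref{ne1} (applied with the auxiliary point $C\in\mathcal{M}_{1}$) then produces
$$
\|\pi_{1}(P_{T_{\mathcal{M}_{1}}(C)}(B))-\pi_{1}(B)\|_{F}\le
2\|\pi_{1}(B)-P_{T_{\mathcal{M}_{1}}(C)}(B)\|_{F}<8\sqrt{\epsilon}\,\|B-C\|_{F}.
$$

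The main obstacle, which separates Lemma \ref{new2} from the more general Lemma \ref{new1}, is to replace $\|B-C\|_{F}$ by a constant multiple of $\|B-\pi(B)\|_{F}$. Here the hypothesis $B=\pi_{2}(C)$ is indispensable. Since $\pi(B)\in\mathcal{M}_{2}$, the closest-point property gives $\|B-C\|_{F}\le\|\pi(B)-C\|_{F}$. Coupling this with the optimality condition $B-C\perp T_{\mathcal{M}_{2}}(B)$ from Proposition \ref{pro2} and the affine-tangent approximation in Lemma \ref{jia1}(i), which places $\pi(B)-B$ within $O(\epsilon\|\pi(B)-B\|_{F})$ of $T_{\mathcal{M}_{2}}(B)$, one obtains an almost-Pythagorean estimate at the vertex $B$. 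Applying Lemma \ref{an5} in reversed roles to $C\in\mathcal{M}_{1}$ and Lemma \ref{an3} at $C$ then relates $\|B-\pi(B)\|_{F}$ and $\|C-\pi(C)\|_{F}$, and the nontangentiality $\sigma(A)<1$ allows one to conclude $\|B-C\|_{F}\le K\,\|B-\pi(B)\|_{F}$ with a constant $K$ depending only on $\sigma(A)$.

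Combining the two summands and choosing $\epsilon$ so small that $c_{1}+8K\sqrt{\epsilon}<c$, while taking $s$ to be the minimum of the radii produced by Lemmas \ref{ne1}, \ref{an3}, \ref{an5}, and \ref{jia1}, yields the desired estimate. The second statement, with the roles of $\mathcal{M}_{1}$ and $\mathcal{M}_{2}$ reversed, then follows by the symmetric argument.
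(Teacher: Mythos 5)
Your decomposition into $\|\pi_{1}(P_{T_{\mathcal{M}_{1}}(C)}(B))-\pi_{1}(B)\|_{F}+\|\pi_{1}(B)-\pi(B)\|_{F}$ and the estimate $\|\pi_{1}(P_{T_{\mathcal{M}_{1}}(C)}(B))-\pi_{1}(B)\|_{F}\le 2\|P_{T_{\mathcal{M}_{1}}(C)}(B)-\pi_{1}(B)\|_{F}<8\sqrt{\epsilon}\,\|B-C\|_{F}$ are sound and in fact coincide with \eqref{song2}--\eqref{song3} in the paper's proof; the use of Lemma \ref{an5} for the second summand is also fine. The gap is exactly the step you flag as the main obstacle: the inequality $\|B-C\|_{F}\le K\,\|B-\pi(B)\|_{F}$ is not established by the argument you sketch, and it is not true in the generality of the lemma. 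Every tool you invoke (Lemma \ref{an3}, Lemma \ref{an5}, the orthogonality $B-C\perp T_{\mathcal{M}_{2}}(B)$ combined with Lemma \ref{jia1}) yields either an \emph{upper} bound of the form $\|\pi_{j}(X)-\pi(X)\|_{F}\le c\,\|X-\pi(X)\|_{F}$ or the almost-Pythagorean conclusion $\|B-C\|_{F}\lesssim\|C-\pi(B)\|_{F}$; what you actually need is a \emph{lower} bound on $\|B-\pi(B)\|_{F}=\|\pi_{2}(C)-\pi(\pi_{2}(C))\|_{F}$ in terms of $\|C-\pi(C)\|_{F}$, and none of the cited results provides one. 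Concretely, if $\mathcal{M}_{1}$ and $\mathcal{M}_{2}$ are two curves crossing orthogonally at $A$ (so $\sigma(A)=0$, which the nontangentiality hypothesis $\sigma(A)<1$ permits), then for $C\in\mathcal{M}_{1}$ at distance $d$ from $A$ one has $\|B-C\|_{F}=O(d)$ while $\|B-\pi(B)\|_{F}=O(d^{2})$, so no finite $K$ works on any ball around $A$; even when $\sigma(A)>0$, the constant $K\sim\sqrt{1-\sigma(A)^{2}}/\sigma(A)$ would have to be extracted by an argument you have not given.

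For comparison, the paper's proof never converts $\|C-\pi(B)\|_{F}$ into $\|B-\pi(B)\|_{F}$. Following the proof of Theorem 4.5 in \cite{andersson2013alternating}, it normalizes $\pi(B)=0$, introduces the auxiliary points $B'$, $D$, $D'$, $E$, factors the target ratio as $\frac{\|E\|_{F}}{\|D'\|_{F}}\cdot\frac{\|B'\|_{F}}{\|B\|_{F}}\cdot\frac{\|D'\|_{F}}{\|B'\|_{F}}$, and disposes of the term $8\sqrt{\epsilon}(2+\alpha)\|C\|_{F}$ arising from $\|E-D'\|_{F}$ by a case split on whether $\|E\|_{F}/\|D'\|_{F}\le c_{2}$, absorbing the bad case into the choice of $\epsilon$ in \eqref{new6}. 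If you wish to keep your more elementary two-term decomposition, you must either add the hypothesis $\sigma(A)>0$ and actually prove the comparability $\|B-C\|_{F}\le K\,\|B-\pi(B)\|_{F}$ from the definition of $\sigma$ in \eqref{df1}, or restructure the final absorption so that the $\|B-C\|_{F}$ (equivalently $\|C-\pi(B)\|_{F}$) contribution is handled as the paper does rather than bounded by $\|B-\pi(B)\|_{F}$.
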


\begin{proof}
By  Proposition \ref{pro1} and Lemma \ref{lem2}, there exist an  $s_{0}$ such that $P_{T_{\mathcal{M}_{1}}}$, $P_{T_{\mathcal{M}_{1}\cap\mathcal{M}_{2}}}$ and $\pi$ are continuous functions
on $\mathcal{B}(A,s_{0}).$ Hence, we can pick $\alpha>0$ such that
\begin{align*}
\|P_{T_{\mathcal{M}_{1}}}(B)-P_{T_{\mathcal{M}_{1}}}(B^{'})\|_{F}\leq \alpha\|B-B^{'}\|_{F},\|\pi(B)-\pi(B^{'})\|_{F}\leq \alpha \|B-B^{'}\|_{F}
\end{align*}
and
\begin{align*}
\|P_{T_{\mathcal{M}_{1}\cap\mathcal{M}_{2}}}(B)-P_{T_{\mathcal{M}_{1}\cap\mathcal{M}_{2}}}(B^{'})\|_{F}\leq \alpha\|B-B^{'}\|_{F},
\end{align*}
for all $B,B^{'}\in \mathcal{B}(A,s_{0})$.
Fix $c_{1}$ such that $\sigma(A)<c_{1}<c,$ and pick an $s_{1}<s_{0}$ such that
\begin{align*}
sup\{\sigma(Z):Z\in \mathcal{M}_{1}\cap\mathcal{M}_{2}\cap \mathcal{B}(A,s_{1})\}<c_{1}
\end{align*}
Let $c_{2}>1$ such that $c_{2}c_{1}<c.$  Fix $\epsilon$ such that
\begin{align}\label{new6}
\frac{1+\epsilon}{\sqrt{1-\epsilon^2}}<2, ~~8\sqrt{\epsilon}(2+\alpha)\frac{c_{2}}{c_{2}-1}\|C\|_{F}<c\|B\|_{F}~~ \text{and}~~(1+4\sqrt{\epsilon})c_{2}c_{1}<c.
\end{align}
Then fix $s<s_{1}$ such that
\begin{align*}
\pi(\mathcal{B}(A,s))\subset \mathcal{B}(A,s_{1}),
\end{align*}
and let $B\in \mathcal{B}(A,s)\cap \mathcal{M}_{2}.$ There is no restriction to assume that $\pi(B)=0,$ which we do from now on.  Note that
$\pi(B)\in \mathcal{M}_{1}\cap \mathcal{M}_{2}\cap \mathcal{B}(A,s_{1}),$ so $\sigma(0)=\sigma(\pi(B))<c_{1}$.
In order to show
\begin{align*}
\frac{\|\pi_{1}(P_{T_{\mathcal{M}_{1}}(C)}(B))-\pi(B)\|_{F}}{\|B-\pi(B)\|_{F}}=\frac{\|\pi_{1}(P_{T_{\mathcal{M}_{1}}(C)}(B))\|_{F}}{\|B\|_{F}}<c,
\end{align*}
we need  the following auxiliary information.
Setting $B^{'}=P_{T_{\mathcal{M}_{2}}(\pi(B))}(B),$ $D=\pi_{1}(B),$ $D^{'}=P_{T_{\mathcal{M}_{2}}(\pi(B))}(B^{'})$ and $E=\pi_{1}(P_{T_{\mathcal{M}_{1}}(C)}(B)),$ then it is sufficient to show
\begin{align*}
\frac{\|\pi_{1}(P_{T_{\mathcal{M}_{1}}(C)}(B))\|_{F}}{\|B\|_{F}}=\frac{\|E\|_{F}}{\|D^{'} \|_{F}}\frac{\|B^{'} \|_{F}}{\|B\|_{F}}\frac{\|D^{'} \|_{F}}{\|B^{'} \|_{F}}<c.
\end{align*}
The values of the three fractions $\frac{\|E\|_{F}}{\|D^{'} \|_{F}}, \frac{\|B^{'} \|_{F}}{\|B\|_{F}}$  and $\frac{\|D^{'} \|_{F}}{\|B^{'} \|_{F}}$ will be derived independently in the sequel. Recall the definition of $B^{'}$ and   by Lemma \ref{jia2}, we have
\begin{align*}
\|B-B^{'}\|_{F}=\|\pi_{2}(B)-B^{'}\|_{F}<4\sqrt{\epsilon}\|B-\pi(B)\|_{F}.
\end{align*}
Similar to proof of Theorem 4.5 in \cite{andersson2013alternating}, we can get
\begin{align}\label{new5}
\frac{\|B^{'}\|_{F}}{\|B\|_{F}}\leq 1+4\sqrt{\epsilon}, ~~\frac{\|D^{'}\|_{F}}{\|B\|_{F}}\leq c_{1}
\end{align}
and
\begin{align*}
\|D-D^{'}\|_{F}
&=\|\rho_{1}(B^{'})-\pi_{1}(B)\|_{F}\leq \|\rho_{1}(B^{'})-\rho_{1}(B)\|_{F}+\|\rho_{1}(B)-\pi_{1}(B)\|_{F}\\
&\leq \alpha\|B^{'}-B\|_{F}+4\sqrt{\epsilon}\|B\|_{F}=4\sqrt{\epsilon}(1+\alpha)\|B\|_{F},
\end{align*}
respectively.
For $\frac{\|E\|_{F}}{\|D^{'} \|_{F}},$
\begin{align}
\|E-D^{'}\|_{F}
&=\|E-D+D-D^{'}\|_{F}\leq \|E-D\|_{F}+\|D-D^{'}\|_{F} \nonumber\\
&= \|E-P_{T_{\mathcal{M}_{1}}(C)}(B)+P_{T_{\mathcal{M}_{1}}(C)}(B)-D\|_{F}+\|D-D^{'}\|_{F}\nonumber\\
&\leq \|E-P_{T_{\mathcal{M}_{1}}(C)}(B)\|_{F}+\|P_{T_{\mathcal{M}_{1}}(C)}(B)-D\|_{F}+\|D-D^{'}\|_{F}\nonumber\\
&=\|\pi_{1}(P_{T_{\mathcal{M}_{1}}(C)}(B))-P_{T_{\mathcal{M}_{1}}(C)}(B)\|_{F}+\|P_{T_{\mathcal{M}_{1}}(C)}(B)-\pi_{1}(B)\|_{F}\nonumber\\
&~~~+\|\pi_{1}(B)-D^{'}\|_{F}. \label{song1}
\end{align}
By Lemma \ref{ne1}, and note that $B=\pi_{2}(C),\pi(B)$ are all on  $\mathcal{M}_{2},$ then
\begin{align}\label{song2}
\|P_{T_{\mathcal{M}_{1}}(C)}(B)-\pi_{1}(B)\|_{F}<4\sqrt{\epsilon}\|B-C\|_{F}\leq 4\sqrt{\epsilon}\|C-\pi(B)\|_{F}.
\end{align}
Similarly, $\pi_{1}(P_{T_{\mathcal{M}_{1}}(C_{1})}(B))$ and $\pi_{1}(B)$ are all on  $\mathcal{M}_{1}$, thus
\begin{align}\label{song3}
\|\pi_{1}(P_{T_{\mathcal{M}_{1}}(C)}(B))-P_{T_{\mathcal{M}_{1}}(C)}(B)\|_{F}\leq\|P_{T_{\mathcal{M}_{1}}(C)}(B)-\pi_{1}(B)\|_{F}\leq4\sqrt{\epsilon}\|C-\pi(B)\|_{F}.
\end{align}
Combining \eqref{song1}, \eqref{song2} and \eqref{song3}, we have
\begin{align}
\|E-D^{'}\|_{F}&<4\sqrt{\epsilon}\|C-\pi(B)\|_{F}+4\sqrt{\epsilon}\|C-\pi(B)\|_{F}+4\sqrt{\epsilon}(1+\alpha)\|B-\pi(B)\|_{F} \nonumber\\
&= 8\sqrt{\epsilon}\|C-\pi(B)\|_{F}+4\sqrt{\epsilon}(1+\alpha)\|B-C+C-\pi(B)\|_{F} \nonumber\\
&\leq 8\sqrt{\epsilon}\|C-\pi(B)\|_{F}+4\sqrt{\epsilon}(1+\alpha)(\|B-C\|_{F}+\|C-\pi(B)\|_{F})v \nonumber\\
&<8\sqrt{\epsilon}\|C-\pi(B)\|_{F}+8\sqrt{\epsilon}(1+\alpha)\|C-\pi(B)\|_{F}\nonumber\\
&=(2+\alpha)8\sqrt{\epsilon}\|C-\pi(B)\|_{F}.\label{song4}
\end{align}
For the value of $\frac{\|E\|_{F}}{\|D^{'}\|_{F}},$ there exist two case: one case is $\frac{\|E\|_{F}}{\|D^{'}\|_{F}}\leq c_{2}$ and the other one is
$\frac{\|E\|_{F}}{\|D^{'}\|_{F}}> c_{2}.$ If the later case is satisfied, by \eqref{song4}, we have
\begin{align}\label{e1}
&\|E\|_{F}-\|D^{'}\|_{F}<\|E-D^{'}\|_{F}<(2+\alpha)8\sqrt{\epsilon}\|C\|_{F}\nonumber\\
\Rightarrow& (c_{2}-1)\|D^{'}\|_{F}<(2+\alpha)8\sqrt{\epsilon}\|C\|_{F}\Rightarrow \|E\|_{F}<\frac{c_{2}}{c_{2}-1}(2+\alpha)8\sqrt{\epsilon}\|C_{1}\|_{F}.
\end{align}
Then a suitable $\epsilon$  can be chosen such that
\begin{align*}
\frac{c_{2}}{c_{2}-1}(2+\alpha)8\sqrt{\epsilon}\|C\|_{F}<c\|B\|_{F},
\end{align*}
in which case we are done.
For the other case, combining the results in \eqref{new5} and $\frac{\|E\|_{F}}{\|D^{'}\|_{F}}\leq c_{2}$,  we can get
\begin{align*}
\frac{\|E\|_{F}}{\|B\|_{F}}=\frac{\|E\|_{F}}{\|D^{'}\|_{F}}\frac{\|B^{'}\|_{F}}{\|B\|_{F}}\frac{\|D^{'}\|_{F}}{\|B^{'}\|_{F}}<(1+4\sqrt{\epsilon})c_{2}c_{1}<c,
\end{align*}
where the second inequality follows by \eqref{new6}.
\end{proof}

We can now list the main results as the following theorem.
\begin{theorem}\label{thm_convergence}
Suppose that $\mathcal{M}_{1}$, $\mathcal{M}_{2}$ and $\mathcal{M}=\mathcal{M}_{1}\cap \mathcal{M}_{2}$ are $\mathbb{C}^2$-manifold. Let $A_{0}$ be a nontangential point of $\mathcal{M}_{1}\cap \mathcal{M}_{2}$.
Then for any  given $\epsilon$ and
$1>c>\sigma({ A}_{0})$, there exist an
$\xi>0 $  such that for any ${ A}\in {\cal B}({ A}_{0},\xi)$ (the ball neighborhood of $A_0$ with radius $\xi$)
 the sequence $\{{X}_k\}_{k=0}^\infty$ generated by the alternating projection algorithm initializing from given $A$:
\begin{align*}
&X_{0}=\pi_{1}(A),X_{1}=\pi_{2}(X_{0}),X_{2}=\pi_{1}(P_{T_{\mathcal{M}_{1}}(X_{0})}(X_{1})),X_{3}=\pi_{2}(P_{T_{\mathcal{M}_{2}}(X_{1})}(X_{2})),...,\\
&X_{2k}=\pi_{1}(P_{T_{\mathcal{M}_{1}}(X_{2k-2})}(X_{2k-1})),X_{2k+1}=\pi_{2}(P_{T_{\mathcal{M}_{2}}(X_{2k-1})}(X_{2k})),...
\end{align*}
satisfies the following results:
\begin{enumerate}[(i)]
\item  converges to a point $X_\infty \in {\cal M}_{1}\cap {\cal M}_{2}$,
\item $\| { X}_\infty - \pi({ A}) \|_{F} \leq \epsilon \| {A} - \pi({ A}) \|_{F}$,
\item $\| { X}_\infty - {X}_k \|_{F} \leq {\rm const} \cdot c^k \| {A} - \pi({ A}) \|_{F}$.
\end{enumerate}
\end{theorem}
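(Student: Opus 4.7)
The plan is to mirror the convergence argument for classical alternating projections in Andersson--Carlsson, substituting the tangent-space variants of the key estimates. I would first fix an auxiliary rate $\tilde c \in (\sigma(A_0), c)$ and a small auxiliary tolerance $\eta > 0$ compatible with $\epsilon$, then invoke Propositions \ref{pro1} and \ref{pro2} together with Lemmas \ref{lem2}, \ref{ne1}, \ref{new1}, \ref{new2}, \ref{an3} and \ref{an5} to pick a single radius $\xi' > 0$ on which every map appearing in the analysis is well-defined and the contraction/drift estimates hold with the chosen constants. The constant $\xi$ is then chosen strictly smaller than $\xi'$, and an induction proves that the iterates $\{X_k\}_{k \ge 0}$ remain inside $\mathcal{B}(A_0, \xi')$ (so that every application of the lemmas is legitimate); this induction runs in parallel with the contraction estimate below.

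Second, I would prove geometric contraction of the distances to the intersection. Set $d_k := \|X_k - \pi(X_k)\|_F$. For the TAP update $X_{2k} = \pi_1(P_{T_{\mathcal{M}_1}(X_{2k-2})}(X_{2k-1}))$, Lemma \ref{new2} with $C = X_{2k-2}$ and $B = X_{2k-1}$ yields
\begin{align*}
\|X_{2k} - \pi(X_{2k-1})\|_F \le \tilde c\, \|X_{2k-1} - \pi(X_{2k-1})\|_F = \tilde c\, d_{2k-1},
\end{align*}
and since $\pi(X_{2k-1}) \in \mathcal{M}$, the minimizing property of $\pi$ forces $d_{2k} \le \tilde c\, d_{2k-1}$. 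The symmetric argument gives $d_{2k+1} \le \tilde c\, d_{2k}$, hence $d_k \le \tilde c^{k-1} d_1$ for all $k\ge 1$. Initial bounds $d_0, d_1 \le \mathrm{const}\cdot\|A - \pi(A)\|_F$ come from the two classical steps $X_0 = \pi_1(A)$, $X_1 = \pi_2(X_0)$ via Lemmas \ref{an3} and \ref{an5}.

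Third, I would bound the drift of the projection via Lemma \ref{new1}: with the same substitution one obtains $\|\pi(X_{2k}) - \pi(X_{2k-1})\|_F \le \alpha(\epsilon) d_{2k-1} + \beta(\epsilon)\bigl(d_{2k-2} + \|\pi(X_{2k-2}) - \pi(X_{2k-1})\|_F\bigr)$, which when chained with the analogous estimate at the previous step produces a geometric-in-$k$ bound on the drift. Summability yields that $\{\pi(X_k)\}$, and hence $\{X_k\}$ via the triangle inequality $\|X_k - X_{k-1}\|_F \le d_k + d_{k-1} + \|\pi(X_k) - \pi(X_{k-1})\|_F$, is Cauchy; since $d_k \to 0$ the limit $X_\infty$ lies in $\mathcal{M}_1 \cap \mathcal{M}_2$, giving (i). For (ii), summing the drift bounds and combining with $\|\pi(X_0) - \pi(A)\|_F \le \epsilon\|A - \pi(A)\|_F$ from Lemma \ref{an3} yields $\|X_\infty - \pi(A)\|_F \le \epsilon\|A - \pi(A)\|_F$, provided the free parameters in Step~1 were chosen sufficiently small. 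For (iii), the geometric tail $\sum_{j \ge k}\bigl(d_j + d_{j+1} + \|\pi(X_{j+1}) - \pi(X_j)\|_F\bigr) = O(\tilde c^k \|A - \pi(A)\|_F)$ yields the claimed bound with any $c > \tilde c$.

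The main obstacle is the potential circularity in Step~1: the radius on which all the lemmas apply depends on constants drawn from those very lemmas, so the induction keeping iterates in $\mathcal{B}(A_0, \xi')$ must be set up so that the telescoping total travel $\sum_{k}\|X_{k+1} - X_k\|_F$ is controlled by the contraction already being established. A secondary technical subtlety is that the hypothesis ``$B = \pi_2(C)$'' of Lemma \ref{new2} is not literally the TAP update (here $X_{2k-1}$ is itself a tangent-space projection of $X_{2k-2}$, not $\pi_2(X_{2k-2})$), so one must verify that the proof of Lemma \ref{new2} tolerates a small perturbation of $B$ off $\pi_2(C)$, absorbing the extra error into $\eta$.
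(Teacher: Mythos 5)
Your proposal follows essentially the same route as the paper's own proof: geometric contraction of $d_k=\|X_k-\pi(X_k)\|_F$ via Lemma \ref{new2}, a summable drift bound on $\|\pi(X_k)-\pi(X_{k-1})\|_F$ via Lemma \ref{new1} (the paper controls the term $\|X_{k-2}-\pi(X_{k-1})\|_F$ by $(1+\alpha)\|X_{k-2}-\pi(X_{k-2})\|_F$ rather than by your chained recursion, but to the same effect), and an induction keeping the iterates in $\mathcal{B}(A_0,s)$ that is closed precisely by the telescoped drift estimate, so the circularity you worry about resolves exactly as you anticipate. The subtlety you flag --- that for $k\ge 2$ one has $X_{2k-1}=\pi_2\bigl(P_{T_{\mathcal{M}_2}(X_{2k-3})}(X_{2k-2})\bigr)$ rather than $\pi_2(X_{2k-2})$, so the hypothesis $B=\pi_2(C)$ of Lemma \ref{new2} is not literally satisfied --- is genuine and is passed over silently in the published proof, so your plan to absorb that perturbation into an auxiliary tolerance is, if anything, more careful than the paper's argument.
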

\begin{proof}

Assume that $\epsilon<1$ and $\sigma({ A}_{0})<c<1$.  Recall Lemma \ref{new1}, set
\begin{align*}
\varepsilon_{1}=\varepsilon=\frac{1-c}{2(3-c)}\epsilon,~~\varepsilon_{2}=\frac{1-c}{2+2\alpha}\epsilon,
\end{align*}
where $\alpha$ is a constant given as in the proof of Lemma \ref{new2}. Moreover, there exist some possibly distinct radii that  guarantee Lemma \ref{an5}-\ref{new2} are satisfied. Let $s$
 denote the minimum of these possibly  radii and pick
 $r<\frac{s(1-\epsilon)}{4(2+\epsilon)},$ so that $\pi(\mathcal{B}(A_{0},r))\subseteq \mathcal{B}(A_{0},s/4).$
Then $\|\pi(A)-A_{0}\|_{F}<s/4$ follows from the latter condition.
Denote $l=\|A-\pi(A)\|_{F},$ and note that
$$
l=\|A-A_{0}+A_{0}-\pi(A)\|_{F}\leq \|A-A_{0}\|_{F}+\|A_{0}-\pi(A)\|_{F}\leq r+s/4.
$$
As $\pi(A)\in \mathcal{M}_{1}\cap \mathcal{M}_{2}$ and note that $X_{0}=\pi_{1}(A),$ we have
\begin{align*}
\|X_{0}-A\|_{F}=\|\pi_{1}(A)-A\|_{F}\leq \|\pi(A)-A\|_{F}=l
\end{align*}
and
\begin{align*}
\|X_{0}-\pi(X_{0})\|_{F}\leq \|X_{0}-\pi(A)\|_{F}\leq \|X_{0}-A\|_{F}+\|A-\pi(A)\|_{F}\leq 2l.
\end{align*}
If \begin{align}\label{m1}
\{X_{k}\}_{k=0}^{k-1}\subseteq \mathcal{B}(A_{0},s)
\end{align}
 is satisfied, then by  Lemma \ref{new2}, we can get
\begin{align}\label{jia3}
\|X_{k}-\pi(X_{k})\|_{F}\leq\|X_{k}-\pi(X_{k-1})\|_{F}\leq c \|X_{k-1}-\pi(X_{k-1})\|_{F}.
\end{align}
Next we will show \eqref{m1} is satisfied by induction. Firstly, for $k=0,$
\begin{align*}
\|X_{0}-A_{0}\|_{F}\leq \|X_{0}-A\|_{F}+\|A-A_{0}\|_{F}\leq l+r/2\leq 2r+s/4\leq \frac{s(1-\epsilon)}{2(2+\epsilon)}+s/4<s.
\end{align*}
Assume that \eqref{m1} is satisfied when $n=k,$ then it follows from \eqref{jia3} that
\begin{align}\label{jia5}
\|X_{k}-\pi(X_{k})\|_{F}\leq c^{k}\|X_{0}-\pi(X_{0})\|_{F}\leq 2lc^{k}.
\end{align}
Note that
\begin{align}
 \|X_{k-2}-\pi(X_{k-1})\|_{F}
 =&\|X_{k-2}-\pi(\pi_{2}(X_{k-2}))\|_{F}\nonumber \\
 =&\|X_{k-2}-\pi(X_{k-2})+\pi(X_{k-2})-\pi(\pi_{2}(X_{k-2}))\|_{F}\nonumber\\
 \leq &\|X_{k-2}-\pi(X_{k-2})\|_{F}+\|\pi(X_{k-2})-\pi(\pi_{2}(X_{k-2}))\|_{F}\nonumber\\
 \leq & \|X_{k-2}-\pi(X_{k-2})\|_{F}+\alpha\|X_{k-2}-\pi_{2}(X_{k-2})\|_{F}\nonumber\\
 \leq &(1+\alpha)\|X_{k-2}-\pi(X_{k-2})\|_{F}.\nonumber
\end{align}
The second part of the second inequality follows by the continuous of the projection $\pi,$ the third inequality follows by $\|X_{k-2}-\pi_{2}(X_{k-2})\|_{F}\leq\|X_{k-2}-\pi(X_{k-2})\|_{F}.$
Applying Lemma \ref{new1} gives
\begin{align}
\|\pi(X_{k})-\pi(X_{k-1})\|_{F}
&<\varepsilon_{1} \|X_{k-1}-\pi(X_{k-1})\|_{F}+\varepsilon_{2} \|X_{k-2}-\pi(X_{k-1})\|_{F}\nonumber\\
&<\varepsilon_{1} \|X_{k-1}-\pi(X_{k-1})\|_{F}+\varepsilon_{2}(1+\alpha) \|X_{k-2}-\pi(X_{k-2})\|_{F}\nonumber\\
&\leq 2\varepsilon_{1}c^{k-1}l+2\varepsilon_{2}(1+\alpha)c^{k-2}l=(\varepsilon_{1}c+\varepsilon_{2}(1+\alpha))2c^{k-2}l.\label{new4}
\end{align}
Recall Lemma \ref{an4} and the inequality derived in \eqref{new4}, we have
\begin{align}\label{jia4}
\|\pi(X_{k})-\pi(A)\|_{F}&\leq \|\pi(A)-\pi(X_{0})\|_{F}+\|\pi(X_{1})-\pi(X_{0})\|_{F}+\sum_{j=2}^{k}\|\pi(X_{j})-\pi(X_{j-1})\|_{F} \nonumber\\
&\leq \varepsilon l+2\varepsilon l+\sum_{j=2}^{k}(\varepsilon_{1}c+\varepsilon_{2}(1+\alpha))2c^{j-2}l \nonumber\\
&\leq  3\varepsilon l+\frac{2(\varepsilon_{1}c+\varepsilon_{2}(1+\alpha))}{1-c}l=\frac{3\varepsilon(1-c)+2\varepsilon c+(1+\alpha)\varepsilon_{2}}{1-c}l\nonumber\\
&\leq \epsilon l.
\end{align}
Thus,
\begin{align*}
\|A_{0}-X_{k}\|_{F}&\leq \|A_{0}-\pi(A)\|_{F}+\|\pi(A)-\pi(X_{k})\|_{F}+\|\pi(X_{k})-X_{k}\|_{F}\\
&\leq s/4+  \epsilon l  +2l<s,
\end{align*}
which shows that \eqref{m1} is satisfied.

It follows from \eqref{new4} that the sequence $(\pi(B_{k}))_{k=1}^{\infty}$ is a Cauchy sequence, and then it converges to some point $B_{\infty}.$
Moreover, by \eqref{jia5} the sequence $(B_{k})_{k=1}^{\infty}$ must also converge, and the limit point is also $B_{\infty}.$ It follows that $B_{\infty}=\pi(B_{\infty}),$
then $(i)$ is concluded.
Moreover, by  taking the limit  \eqref{jia4} we can get $(ii).$ For $(iii).$ Note that
\begin{align}
\|\pi(B_{k})-B_{\infty}\|_{F}\leq \sum_{j=k+1}^{\infty}\|\pi(X_{j})-\pi(X_{j-1})\|_{F}\leq \frac{2l\varepsilon c^{k}}{1-c}+\frac{2(1+\alpha)l\varepsilon_{2} c^{k-1}}{1-c},
\end{align}
and combine with \eqref{jia5}, we can get
\begin{align*}
\|B_{k}-B_{\infty}\|_{F}
& \leq\|B_{k}-\pi(B_{k})\|_{F}+\|\pi(B_{k})-B_{\infty}\|_{F}
 \leq (2 l +\frac{2 l\varepsilon }{1-c}+\frac{2(1+\alpha)l\varepsilon_{2}}{1-c}) c^{k}\\
 &= \beta c^{k} l,
\end{align*}
with a constant $\beta$ as desired.
\end{proof}

In the next section, we will test the performance of the proposed TAP method.

\section{Numerical Examples}

In this section, numerical results are presented to show the effectiveness of the proposed TAP method (Algorithm 1).
There are two kinds of examples to be tested: nonnegative low rank matrix approximation and
low rank quaternion (color image) matrix approximation.
 All
the experiments are performed under Windows 7 and MATLAB R2018a
running on a desktop (Intel Core i7, @ 3.40GHz, 8.00G RAM).

\subsection{Nonnegative Low Rank Matrix Approximation}
\label{sec:main2}

In the first experiment, we randomly generated $n$-by-$n$ nonnegative matrices $A$
where their matrix entries follow a uniform distribution in between 0 and 1.
We employed the proposed TAP method and AP method \cite{sm2019}
to test the relative residual $\| A - X_{c} \|_F / \| A \|_F$,
where $X_c$ are the computed rank $r$ solutions by different methods.
For comparison, we also list the results by nonnegative matrix factorization
algorithms: A-MU \cite{gillis2012accelerated}, A-HALS \cite{gillis2012accelerated} and A-PG \cite{Lin06a}.

Tables \ref{table1a}  shows the relative
residuals of the computed solutions from the proposed TAP method and the other testing methods
for synthetic data sets of sizes 200-by-200, 400-by-400 and 800-by-800.
Note that there is no guarantee that
other testing NMF algorithms can determine the underlying nonnegative low rank factorization.
In the tables, it is clear that the testing NMF algorithms cannot obtain the underlying low rank factorization.
One of the reason may be that NMF algorithms can be sensitive to initial guesses.
In the tables, we illustrate this phenomena by displaying
the mean relative residual and the range containing both the minimum and the maximum relative residuals
by using ten initial guesses randomly generated.
We find in the table that the relative residuals computed by the TAP method is the same as those
by the AP method. It implies that the proposed TAP method can achieve the same accuracy of classical
alternating projection. According to the tables, the relative residuals by
both TAP and AP methods are always smaller than the minimum relative residuals by the testing NMF algorithms.
In addition, we report the computational time (seconds calculated by MATLAB) in the tables. We see that the
computational time required by the proposed TAP method is less than that required by AP method.

\begin{table}
 \begin{center}
  \setlength{\abovecaptionskip}{-1pt}
       \setlength{\belowcaptionskip}{-1pt}
\caption{The comparison of different algorithms.} \label{table1a}

\scalebox{0.8}{
\begin{tabular}{|l||c|c|c|}
\hline
       & \multicolumn{3}{c|}{200-by-200 matrix} \\
\cline{2-4}
Method             & $r=10$            & $r=20$               & $r=40$ \\
\hline
TAP                &  0.4576          &  0.4161               & 0.3247         \\
Time               &  0.42          &  0.48               & 0.38        \\
\hline
AP                 &  0.4576          &  0.4161               & 0.3247    \\
Time               &  0.66          &  0.66               & 0.42     \\
\hline
A-MU (mean)        &  0.4592              &  0.4249      &     0.3733            \\
A-MU (range)       & [0.4591, 0.4593] & [0.4246, 0.4251] & [0.3729, 0.3737]  \\
Time (mean)        &  8.32              & 9.54                  & 15.34                \\
Time (range)       & [8.00, 8.81]     &  [9.41, 9.61]         &  [14.72, 15.75]               \\
\hline
\hline
A-HALS (mean)       &      0.4591          &    0.4246                    &  0.3717 \\
A-HALS (range)      &     [0.4590, 0.4593] &     [0.4244, 0.4247]         &  [0.3714, 0.3719] \\	
Time (mean)         &     1.09            &    1.95                   &   4.01         \\
Time (range)        &    [0.98, 1.22]    &   [1.86, 2.05]         &  [3.86, 4.13]            \\
\hline
A-PG (mean)          &  0.4591               & 0.4244                     & 0.3717 \\
A-PG (range)         & [0.4590,0.4592]       &  [0.4243, 0.4246]         &  [0.3715, 0.3719]	 \\		
Time (mean)          &  14.77             & 16.24                      &  21.52          \\
Time (range)         &  [14.50,15.03]     &  [15.81, 16.55]        & [21.02, 21.77]           \\
\hline
\hline
       & \multicolumn{3}{c|}{400-by-400 matrix} \\
\cline{2-4}
Method                & $r=20$                & $r=40$               & $r=80$ \\
\hline
TAP                   & 0.4573                   & 0.4161                     & 0.3421           \\
Time                  & 1.55                 & 1.32                     &  1.10          \\
\hline
AP                     &0.4573                   & 0.4161                    &  0.3421  \\
Time                   & 2.95                  &2.47                      & 1.68           \\
\hline
A-MU (mean)            &  0.4606              & 0.4301                           &  0.3857 \\
A-MU (range)          &  [0.4605, 0.4607]     & [0.4300, 0.4302]                 & [0.3856, 0.3860]  \\
Time (mean)           &  37.80              & 38.72                          & 46.41           \\
Time (range)          &  [36.67, 39.03]   &  [38.21, 39.18]              & [45.87, 48.28]           \\
\hline
\hline
A-HALS (mean)          &0.4604                    &0.4295                   &0.3836   \\
A-HALS (range)         &[0.4603, 0.4605]          &[0.4294, 0.4296]         &[0.3833, 0.3838]    \\	
Time (mean)            & 3.10                   &7.40                   & 19.67           \\
Time (range)           & [3.03, 3.25]         &[7.12, 7.60]         & [19.04, 20.61]           \\
\hline
A-PG (mean)           &0.4604                &0.4297                         &0.3850   \\
A-PG (range)          &[0.4604, 0.4605]      &[0.4296, 0.4298]               &[0.3847, 0.3853] 	 \\		
Time (mean)           & 51.68              &60.80                        &61.95            \\
Time (range)          & [51.04, 52.26]   & [60.62, 61.01]            & [61.34, 62.64]           \\
\hline
\hline
       & \multicolumn{3}{c|}{800-by-800 matrix} \\
\cline{2-4}
Method       & $r=40$             & $r=80$                &   $r=160$ \\
\hline
TAP          &0.4550                    &0.4144                      &0.3412            \\
Time         &7.14                    &4.84                      &4.80            \\
\hline
AP           &0.4550                    &0.4144                      &0.3412  \\
Time         &15.84                   &9.55                      &7.11            \\
\hline
A-MU (mean)   &0.4608                 &0.4350                       &0.3984  \\
A-MU (range)  &[0.4607, 0.4609]       &[0.4349, 0.4351]             &[0.3982, 0.3986]   \\
Time (mean)   & 60.29               &60.96                      &61.31            \\
Time (range)  & [60.03, 60.65]     &[60.61, 61.58]           &[60.72, 61.94]            \\
\hline
\hline
A-HALS (mean)    &0.4605                       &0.4336                              &0.3984 \\
A-HALS (range)   &[0.4604,0.4605]              &[0.4335, 0.4336]                     &[0.3982, 0.3986] \\	
	Time (mean)  &18.54                      &47.70                                 &61.30            \\
Time (range)     &[17.76, 19.48]           &[43.33, 52.75]                      & [60.71, 61.91]           \\
\hline
A-PG (mean)      &0.4606                    &0.4343                                 & 0.4007 \\
A-PG (range)     &[0.4606, 0.4607]          &[0.4342, 0.4344]                       & [0.4005, 0.4012]	 \\		
Time (mean)      & 60.38                  & 61.26                               &  61.76          \\
Time (range)     &[60.12, 60.79]         &[60.78, 61.81]                     & [61.29 62.48]           \\
\hline
\end{tabular}}
\end{center}
\end{table}

Moreover, we considered the CBCL face database \cite{face}.
In the face database, there are $m=2469$ facial images, each consisting of $n=19\times19 = 361$ pixels, and constituting a face image matrix $A\in \mathbb{R}_{+}^{361 \times 2469}$. We tested several values of $r=20,40,60,80,100$ for nonnegative low rank minimization
and compared the proposed TAP method with the other algorithms.
In the testing NMF algorithms, we used 10 different initial guesses and report the results of
mean relative residuals in the table.
We see from Table \ref{table1e} that the relative residuals computed by the proposed
TAP method and the AP method is smaller than the mean relative residuals by the testing NMF algorithms.
Again the computational time required by the proposed TAP method is smaller than
that by the AP method.

\begin{table}
\centering
\caption{The relative residuals and computational time (in seconds) by different algorithms for face data matrix.}\label{table1e}
  \scalebox{0.9}{
\begin{tabular}{|l||c|c|c|c|c|}
  \hline
       & \multicolumn{5}{c|}{361-by-2649 matrix} \\
\cline{2-6}
Method          & $r=20$                  & $r=40$                &   $r=60$              & $r=80$              & $r=100$\\
\hline
TAP              &0.1170                  & 0.0839                &0.0645                &0.0529                &0.0438   \\
Time             &26.27                   & 13.97                 &9.98                  &7.87                  &7.26   \\
\hline
AP               & 0.1170                 & 0.0839                &0.0645                 &0.0529               &0.0438  \\
Time             & 70.50                  & 34.02                 &21.77                  &15.22                & 11.98 \\
\hline
A-MU (mean)      &0.1223                  & 0.0923                 &0.0756                &0.0645                &0.0561  \\
A-MU (range)     &[0.1219, 0.1229]        & [0.0918, 0.0934]       &[0.0752, 0.0760]      &[0.0640, 0.0649]      & [0.0557, 0.0564] \\
Time (mean)      & 60.29                  & 60.71                 &61.11                  & 61.48                & 61.87  \\
Time (range)     & [60.03, 60.65]         & [60.31,60.17]         &[60.81, 61.17]         &[60.93, 62.01]        & [61.42, 62.64]       \\
\hline
\hline
A-HALS (mean)    & 0.1220                 & 0.0919                &0.0720                 &0.0595                  & 0.0503\\
A-HALS (range)   & [0.1218, 0.1223]       & [0.0916, 0.0922]      &[0.0718,0.0722]        &[0.0594, 0.0598]        & [0.0502, 0.0505]  \\	
	Time (mean)  & 21.62                  & 39.40                 &54.88                  &61.22                   &105.97 \\
Time (range)     & [20.71, 22.78]         & [38.34,40.40]         &[52.15, 59.98]         &[60.73, 61.64]          & [103.03, 107.45]   \\
\hline
A-PG (mean)      &  0.1223                &0.0901                 & 0.0781                 &0.0687                  &0.0625  \\
A-PG (range)     & [0.1219, 0.1229]      &[0.0899,0.0904]         & [0.0776, 0.0787]       &[0.0682, 0.0692]        &[0.0618, 0.0632] \\		
Time (mean)      & 60.38                 & 60.65                  &60.08                   &61.22                   &61.04  \\
Time (range)     & [60.09, 60.71]        & [60.31, 61.17]         &[60.51, 61.39]          &[60.73, 61.64]         &[60.71,61.43]      \\
\hline
\end{tabular} }
\end{table}

\subsection{Low Rank Color Image Approximation}

Nowadays, color images appear commonly in many image processing applications.
The use of quaternion matrices for color images representation
has been studied in the literature, see \cite{le2004singular,,ell2006hypercomplex,han2013color,pei2001efficient,sangwine1996fourier,subakan2011quaternion}.
A color image contains red, blue and green channels, the quaternion approach is to encode the red, green and blue channel pixel values on
the three imaginary parts of a quaternion. The main advantage is that color images can be studied and processed holistically as a vector field, see \cite{ell2006hypercomplex,sangwine1996fourier,subakan2011quaternion}.
We can make use of quaternion matrix to represent color images and study the optimal rank-$r$ approximation of color image in terms of Frobenius norm. A low rank approximation of a purely quaternion matrix (red, green and blue channels color image)
can be obtained by using the quaternion singular value decomposition \cite{zhang1997quaternions}.
However, this approximation may not be optimal in the sense that the resulting approximation matrix may not be purely quaternion, i.e.,
it may contain the real component and it is not referred to a color image.
Here the following optimization problem is considered:
\begin{equation}\label{pmain}
\min_{\rank( {\bf X} )=r, \re( {\bf X} )=0}\| {\bf A} - {\bf X}\|_{F}^{2},
\end{equation}
where ${\bf A}$ is a given purely quaternion matrix and
$\re( {\bf X})$ stands for the real part of ${\bf X}$.
Here, we  employ  two color images ``peppafamily'' and ``pepper'' with sizes $256\times 256$ to compare TAP and AP in terms of residuals and time.
The original colorimages and their ranks $6,12,18$ and $24$ approximations by TAP and AP methods are listed in  Figure \ref{fig1}. We see from the figure that the low rank approximation of color images by the TAP
method and the AP method are about the same in terms of visual quality.
Their relative residuals and their computation time are shown in the Table \ref{newtable}.
According to the table, the relative residuals by both TAP and AP methods are nearly the same, however the computational time of the proposed TAP method is much less than
that required by AP method.

\begin{table}
\centering
\caption{The relative residuals and computational time (in second) by different algorithms for two color image matrices.}\label{newtable}
  \scalebox{0.9}{
\begin{tabular}{|l||c|c|c|c|c|c|c|c|}
  \hline
       & \multicolumn{4}{c|}{`peppafamily' color image matrix}  & \multicolumn{4}{c|}{`pepper' color image matrix}\\
\cline{2-9}
Method          & $r=6$                  & $r=12$                &   $r=18$              & $r=24$  &     $r=6$                  & $r=12$                &   $r=18$              &  $r=24$        \\
\hline
TAP              &0.1321                  & 0.1039                &0.0861                 &0.0734          &0.2267                  & 0.1648                &0.1317                &0.1112           \\
Time             &240.25                   &448.58                &710.58                 &930.01         & 270.44                 &458.05                &754.25                &919.98          \\
\hline
AP               & 0.1320                 & 0.1038                &0.0861                 &0.0733          &0.2267                  & 0.1646               &0.1317                &  0.1112      \\
Time             & 6856.07                & 6709.38               &7044.16                &6609.02         &6943.01                  & 6654.63                &6869.88               &6545.88        \\
\hline
\end{tabular} }
\end{table}

\begin{figure}[h]
\centering
\begin{minipage}{0.24\linewidth}\subfigure[{Original}]{\includegraphics[width=0.75\textwidth]{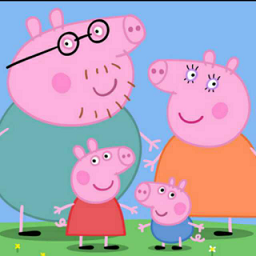}}
\end{minipage}
\begin{minipage}{0.75\linewidth}
\subfigure[{TAP, $r=6$}]{\includegraphics[width=0.24\textwidth]{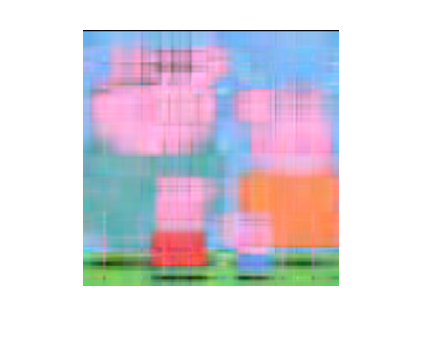}}
\subfigure[{TAP, $r=12$}]{\includegraphics[width=0.24\textwidth]{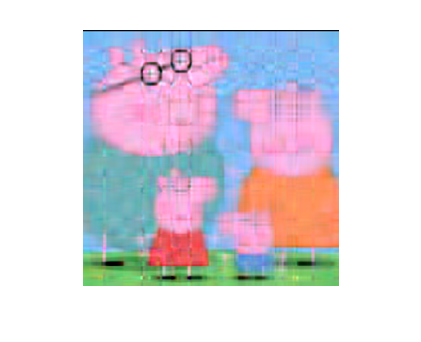}}
\subfigure[{TAP, $r=18$}]{\includegraphics[width=0.24\textwidth]{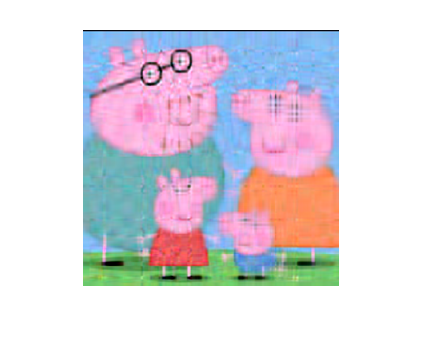}}
\subfigure[{TAP, $r=24$}]{\includegraphics[width=0.24\textwidth]{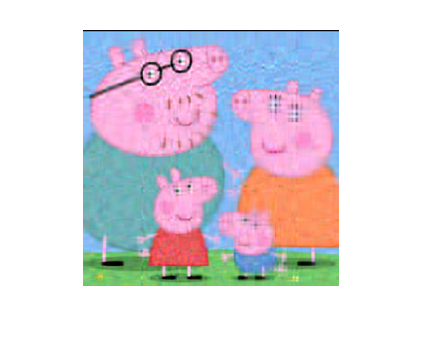}}\\

\subfigure[{AP, $r=6$}]{\includegraphics[width=0.24\textwidth]{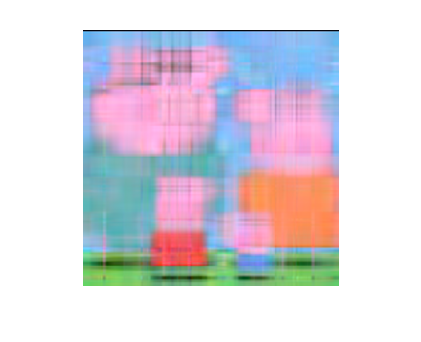}}
\subfigure[{AP, $r=12$}]{\includegraphics[width=0.24\textwidth]{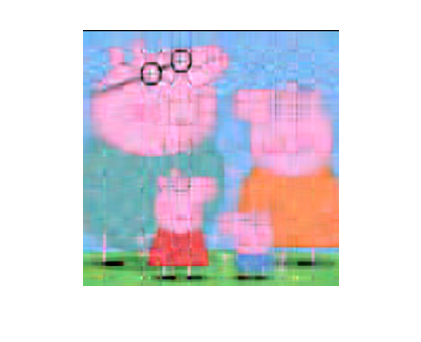}}
\subfigure[{AP, $r=18$}]{\includegraphics[width=0.24\textwidth]{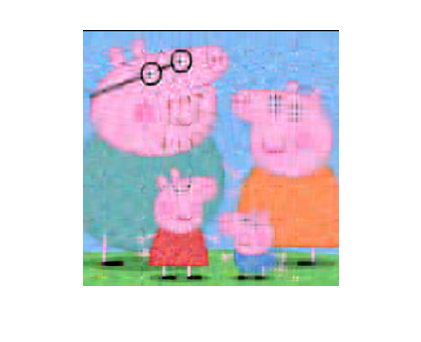}}
\subfigure[{AP, $r=24$}]{\includegraphics[width=0.24\textwidth]{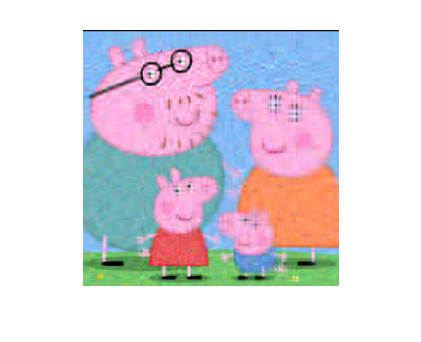}}\\

\end{minipage}

\setcounter{subfigure}{0}
\begin{minipage}{0.24\linewidth}\subfigure[{Original}]{\includegraphics[width=0.75\textwidth]{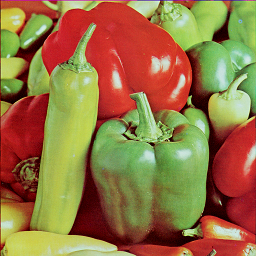}}
\end{minipage}
\begin{minipage}{0.75\linewidth}
\subfigure[{TAP, $r=6$}]{\includegraphics[width=0.24\textwidth]{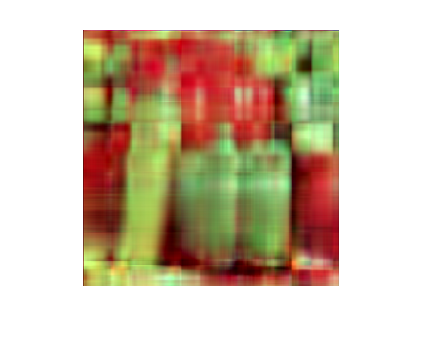}}
\subfigure[{TAP, $r=12$}]{\includegraphics[width=0.24\textwidth]{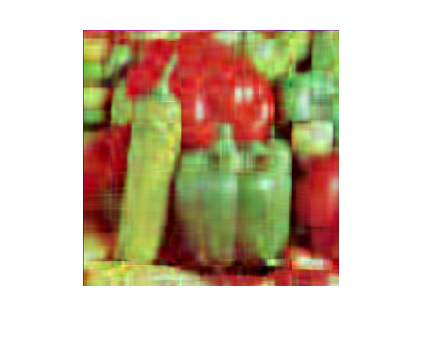}}
\subfigure[{TAP, $r=18$}]{\includegraphics[width=0.24\textwidth]{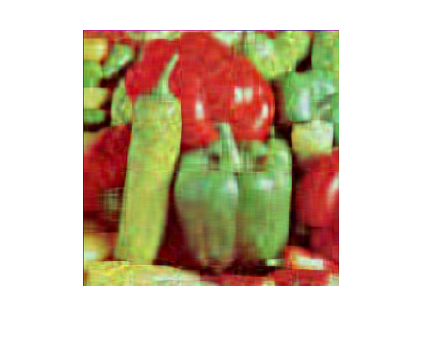}}
\subfigure[{TAP, $r=24$}]{\includegraphics[width=0.24\textwidth]{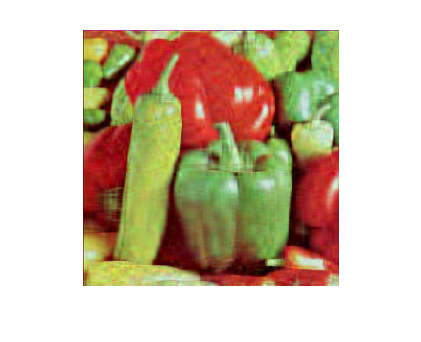}}\\

\subfigure[{AP, $r=6$}]{\includegraphics[width=0.24\textwidth]{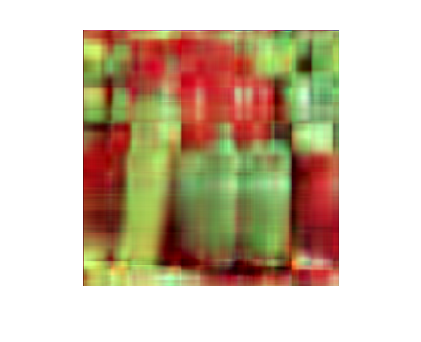}}
\subfigure[{AP, $r=12$}]{\includegraphics[width=0.24\textwidth]{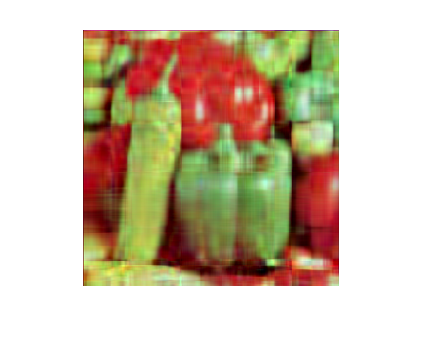}}
\subfigure[{AP, $r=18$}]{\includegraphics[width=0.24\textwidth]{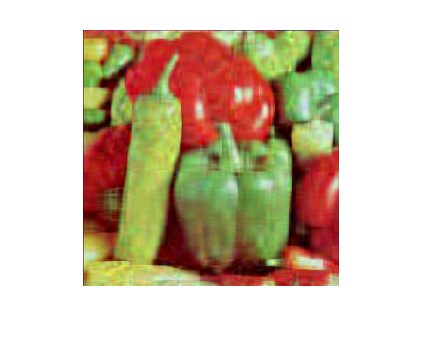}}
\subfigure[{AP, $r=24$}]{\includegraphics[width=0.24\textwidth]{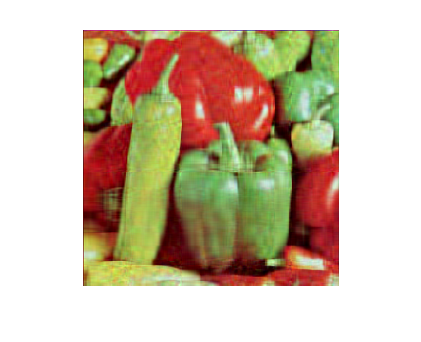}}\\

\end{minipage}

\caption{The rank $6,12,18,24$ approximations of `peppafamily' and `pepper' by TAP and AP methods, respectively
}
\label{fig1}
\end{figure}


\section{Conclusion}\label{sec:clu}

In this paper, we study alternating projections on nontangential manifolds
based on the tangent spaces. We have shown that
the sequence generated by alternating projections on two nontangential manifolds
based on tangent spaces, converges linearly to a point in the intersection of
the two manifolds where the convergent point is close to the optimal solution.
Numerical examples based nonnegative low rank matrix approximation and low
rank image quaternion matrix (color image) approximation are given to demonstrate
that the performance of the proposed method is better than that of the
classical alternating projection method in terms of computational time.

As a future research work, it is interesting to study applications involved
the projections of manifolds (for example face recognition). Moreover,
In many applications, researchers have suggested to use the other
norms (such as $l_1$ norm) in data fitting instead
of Frobenius norm to deal with other machine learning applications. It is
necessary to develop the related algorithms for such manifold optimization problems.


\begin{thebibliography}{99}

\bibitem{absil2009optimization}
P. Absil, R.~Mahony and R.~Sepulchre, Optimization algorithms on
  matrix manifolds, Princeton University Press, 2009.

\bibitem{andersson2013alternating}
F.~Andersson and M.~Carlsson,  {\em Alternating projections on nontangential manifolds, Constructive approximation,} 38 (2013) 489-525.


\bibitem{berger1988}
 M. Berger and B. Gostiaux, Differential Geometry: Manifolds, Curves and Surfaces. Springer, Berlin, 2012.

 \bibitem{le2004singular}
N.~Bihan and J.~Mars, {\em Singular value decomposition of quaternion
  matrices: a new tool for vector-sensor signal processing,} Signal processing,
  84 (2004) 1177-1199.

 \bibitem{ccw2019}
H. Cai, J. Cai and K. Wei, {\em Accelerated alternating projections for robust principal component analysis}, Journal of machine learning research 20 (2019) 1-33.
\bibitem{chenchu1996}

X. Chen and M. Chu, {\em On the least squares solution of inverse eigenvalue problems,} SIAM J. Number. Anal. 33 (1996) 2417-2430.

\bibitem{combettes1993signal}
 P. Combettes, {\em Signal recovery by best feasible approximation},
  IEEE transactions on Image Processing, 2 (1993) 269-271.

  \bibitem{ell2006hypercomplex}
 T. Ell and S. Sangwine,  {\em Hypercomplex fourier transforms of
  color images,} IEEE Transactions on image processing, 16 (2006) 22--35.


\bibitem{gillis2012accelerated}
N. Gillis and F. Glineur, {\em Accelerated multiplicative updates and hierarchical ALS algorithms for nonnegative matrix factorization,}
\emph{Neural computation} 24(4) (2012) 1085-1108.

\bibitem{golub2012matrix}
G. Golub and C. Van~Loan, Matrix Computations, vol.~3, JHU
  press, 2012.

\bibitem{grigoriadis1994application}
 K. Grigoriadis, A. Frazho and R. Skelton,  {\em Application of
  alternating convex projection methods for computation of positive toeplitz
  matrices,} IEEE transactions on signal processing, 42 (1994) 1873-1875.

\bibitem{grigoriadis1996}
 K. Grigoriadis and R. Skelon, {\em Low-order control design for LMI problems using alternating projection methods,} Automatica 32 (1996) 1117-1125.

\bibitem{Griogoriadis2000}
K. Grigoriadis and E. Beran, { Alternating projection algorithms for linear matrix inequalities problem with rank constraints, Adv. Linear Matrix Inequality Methos in Control,} SIAM, Philadelphia, 256-267.


\bibitem{hamaker1978angles}
 C.~Hamaker and D.~Solmon,  {\em The angles between the null spaces of x
  rays,} Journal of mathematical analysis and applications, 62 (1978)
  1-23.

\bibitem{han2013color}
 X.~Han, J.~Wu, L.~Yan, L.~Senhadji and H.~Shu, {\em Color image recovery
  via quaternion matrix completion}, in 2013 6th International Congress on
  Image and Signal Processing (CISP), vol.~1, IEEE (2013), 358--362.

\bibitem{higham2002computing}
N. Higham, {\em Computing the nearest correlation matrix -- a problem
  from finance}, IMA journal of Numerical Analysis, 22 (2002) 329--343.


\bibitem{kayalar1988error}
 S.~Kayalar and H. Weinert,  {\em Error bounds for the method of
  alternating projections, Mathematics of Control,} Signals and Systems, 1
  (1988) 43--59.

\bibitem{Lin06a}
 C. Lin, {\em Projected gradient methods for nonnegative matrix factorization,} {Neural computation} 19(10) (2007)  2756-2779.


\bibitem{lee1997conformal}
 S.~Lee, P. Cho, R. Marks and S.~Oh,  {\em Conformal radiotherapy
  computation by the method of alternating projections onto convex sets,}
  Physics in Medicine \& Biology, 42 (1997) 1065.

\bibitem{levi1983signal}
{A.~Levi and H.~Stark}, {\em Signal restoration from phase by projections
  onto convex sets,} JOSA, 73 (1983) 810-822.

\bibitem{lewis2008alternating}
{A. Lewis and J.~Malick},  {\em Alternating projections on manifolds,}
  Mathematics of Operations Research, 33 (2008) 216-234.


\bibitem{Neuman1950}
J. von Neumann, Functional Operators, vol. II: The Geometry of Orthogonal Spaces, Princeton University Press, Princeton, 1950.

\bibitem{orsi2006}
R. Orsi, U. Helmke and J. Moore, {\em A Newton-like method for solving rank constrained linear matrix inequalityies,} Automatica 42 (2006) 1875-1882.

 \bibitem{pei2001efficient}
 S. Pei, J. Ding and J. Chang, {\em Efficient implementation of
  quaternion fourier transform, convolution, and correlation by 2-d complex
  fft}, IEEE Transactions on Signal Processing, 49 (2001) 2783-2797.


\bibitem{sangwine1996fourier}
 S. Sangwine,  {\em Fourier transforms of colour images using quaternion
  or hypercomplex numbers,} Electronics letters, 32 (1996) 1979-1980.

 \bibitem{schwarz1986}
H. Schwarz, {\em \"{U}ber einige Abbildungsaufgaben,}
Gesammelte Mathematische Abhandlungen 11 (1869) 65-83.

\bibitem{sm2019}
G. Song and M. Ng, {\em Nonnegative Low Rank Matrix Approximation for Nonnegative Matrices,}
Applied Mathematics Letter.

 \bibitem{subakan2011quaternion}
{\"O}. Subakan and B. Vemuri, {\em A quaternion framework for color
  image smoothing and segmentation}, International Journal of Computer Vision,
  91 (2011) 233-250.

\bibitem {kwei}
K. Wei, J. Cai, T. Chan and S. Leung, {\em Guarantees of Riemanninan optimization for low rank matrix completion,} Inverse Problems and Imaging, 14(2) (2020) 233-265.

\bibitem{widrow1987adaptive}
B.~Widrow, {\em Adaptive inverse control}, in Adaptive Systems in Control
  and Signal Processing 1986, Elsevier, (1987) 1-5.


  \bibitem{zhang1997quaternions}
F.~Zhang, {\em Quaternions and matrices of quaternions}, Linear algebra
  and its applications, 251 (1997) 21-57.


\bibitem{face}
CBCL Face Database, \emph{MIT Center For Biological and Computation Leearning}, http://www.ai.mit.edu/projects/cbcl.


\end{thebibliography}
\end{document}